\newtheorem*{theoA}{Theorem A}
\newtheorem*{theoB}{Theorem B}
\newtheorem*{theoC}{Theorem C}
\newtheorem*{theoD}{Theorem D}
\newtheorem*{theoE}{Theorem E}
\newtheorem*{lemA}{Lemma A}
\newtheorem*{cor A}{Corollary A}
\newtheorem*{cor B}{Corollary B}
\newtheorem{theo}{Theorem}[section]
\newtheorem{lem}{Lemma}[section]
\newtheorem{prob}{Problem}[section]
\newcommand{\ol}{\overline}
\newcommand{\be}{\begin{equation}}
\newcommand{\ee}{\end{equation}}
\newcommand{\beas}{\begin{eqnarray*}}
\newcommand{\eeas}{\end{eqnarray*}}
\newcommand{\bea}{\begin{eqnarray}}
\newcommand{\eea}{\end{eqnarray}}
\numberwithin{equation}{section}
\begin{document}
\title[M\MakeLowercase{ultidimensional analogues of the improved} B\MakeLowercase{ohr's} \MakeLowercase{inequality}]{\LARGE M\MakeLowercase{ultidimensional analogues of the improved} B\MakeLowercase{ohr's} \MakeLowercase{inequality}}
\date{}
\author[M. B. A\MakeLowercase{hamed}, S. M\MakeLowercase{ajumder}, N. S\MakeLowercase{arkar} \MakeLowercase{and} M.-S. L\MakeLowercase{iu} ]{M\MakeLowercase{olla} B\MakeLowercase{asir} A\MakeLowercase{hamed}$^*$, S\MakeLowercase{ujoy} M\MakeLowercase{ajumder}, N\MakeLowercase{abadwip} S\MakeLowercase{arkar} \MakeLowercase{and} M\MakeLowercase{ing}-S\MakeLowercase{heng} L\MakeLowercase{iu}}

\address{Molla Basir Ahamed,
	Department of Mathematics,
	Jadavpur University,
	Kolkata-700032, West Bengal, India.}
\email{mbahamed.math@jadavpuruniversity.in}

\address{Sujoy Majumder, Department of Mathematics, Raiganj University, Raiganj, West Bengal-733134, India.}
\email{sm05math@gmail.com}

\address{Nabadwip Sarkar, Department of Mathematics, Raiganj University, Raiganj, West Bengal-733134, India.}
\email{naba.iitbmath@gmail.com}

\address{Ming-Sheng Liu, School of Mathematical Sciences, South China Normal University, Guangzhou, Guangdong 510631, China.}
\email{liumsh65@163.com}

\renewcommand{\thefootnote}{}
\footnote{2010 \emph{Mathematics Subject Classification}: 30A10, 30C45, 30C62, 30C75.}
\footnote{\emph{Key words and phrases}: Bounded holomorphic functions, Multidimensional Bohr.}
\footnote{*\emph{Corresponding Author}: Molla Basir Ahamed.}

\renewcommand{\thefootnote}{\arabic{footnote}}
\setcounter{footnote}{0}

\begin{abstract}
The main aim of this article is to establish a sharp improvement of the classical Bohr inequality for bounded holomorphic mappings in the polydisk $\mathbb{P}\Delta(0;1_n)$. We also prove two other sharp versions of the Bohr inequality in the setting of several complex variables by replacing the constant term with the absolute value of the function and the square of the absolute value of the function, respectively. All the results are shown to be sharp.
\end{abstract}
\thanks{Typeset by \AmS -\LaTeX}
\maketitle

\section{\bf Preliminaries and some basic questions}

The classical theorem of Harald Bohr \cite{Bohr-PLMS-2014}, originally examined a century ago, continues to generate intensive research on what is now known as Bohr's phenomenon. Renewed interest surged in the 1990s due to successful extensions to holomorphic functions of several complex variables and to more abstract functional analytic settings.For instance, in $1997$, Boas and Khavinson \cite{Boas-Khavinson-PAMS-1997} introduced and determined the $n$-dimensional Bohr radius for the family of holomorphic functions bounded by unity on the polydisk (see Section \ref{Sub-Sec-1.3} for a detailed discussion). This seminal work stimulated significant research interest in Bohr-type questions across diverse mathematical domains.Subsequent investigations have yielded further results on Bohr's phenomenon for multidimensional power series. Notable contributions in this area include those by Aizenberg \cite{Aizen-PAMS-2000,Aizenberg,Aizenberg-SM-2007}, Aizenberg \textit{et al.} \cite{Aizenberg-PAMC-1999,Aizenberg-SM-2005,Aizenberg-Aytuna-Djakov-JMAA-2001}, Defant and Frerick \cite{Defant-Frerick-IJM-2006}, and Djakov and Ramanujan \cite{Djakov-Ramanujan-JA-2000}. A comprehensive overview of various aspects and generalizations of Bohr's inequality can be found in \cite{Lata-Singh-PAMS-2022,Liu-Pon-PAMS-2021,Ali-Abu-Muhanna-Ponnusamy,Alkhaleefah-Kayumov-Ponnusamy-PAMS-2019,Defant-Frerick-AM-2011,Hamada-IJM-2012,Paulsen-PLMS-2002,Paulsen-Singh-PAMS-2004,Paulsen-BLMS-2006}, the monographs by Kresin and Maz'ya \cite{Kresin-1903}, and the references cited therein. In particular, \cite[Section 6.4]{Kresin-1903} on Bohr-type theorems highlights rich opportunities to extend several existing inequalities to holomorphic functions of several complex variables and, significantly, to solutions of PDEs.
\subsection{\bf Classical Bohr inequality and its recent implications}
Let $ \mathcal{B} $ denote the class of analytic functions in the unit disk $ \mathbb{D}:=\{z\in\mathbb{C} : |z|<1\} $ of the form $ f(z)=\sum_{k=0}^{\infty}a_k z^k $ such that $ |f(z)|<1 $ in $ \mathbb{D} $. In the study of Dirichlet series, in $ 1914$, Harald Bohr \cite{Bohr-PLMS-2014} discovered the following interesting phenomenon that if $ f\in\mathcal{B} $, then its associated majorant series
\begin{align}
	\label{b1}	M_f(r):=\sum_{k=0}^{\infty}|a_k|r^k\leq 1\;\;\mbox{for}\;\; |z|=r\leq \frac{1}{3}.
\end{align}
The constant $ 1/3 $, known as the Bohr radius for the class $ \mathcal{B} $, is the best possible. Furthermore, for $ \psi_a(z)={(a-z)}/{(1-az)},\; a\in [0,1), $ it follows easily that $ M_{\psi_a}(r)>1 $ if, and only if, $r>1/(1+2a)$ and hence the radius $ 1/3 $ is optimal as $ a\rightarrow 1 $. For other proofs of this theorem, we refer to the articles \cite{Bohr-PLMS-2014,Sidon-MZ-1927,Tomic-MS-1962}. It is worth pointing out that there is no extremal function in $\mathcal{B}$ such that the Bohr radius is precisely ${1/3}$ (see \cite{Garcia-Mashreghi-2018}). \vspace{1.2mm}

This result has generated substantial interest in Bohr's inequality across various settings; see, for instance, \cite{Alkhaleefah-Kayumov-Ponnusamy-PAMS-2019,Boas-Khavinson-PAMS-1997}, \cite{Paulsen-Singh-PAMS-2004}, \cite{Popescu-TAMS-2007} and the recent survey by Ali \textit{et al.} \cite{Ali-Abu-Muhanna-Ponnusamy}, \cite{Ismagilov-Kayumova-Kayumov-Ponnusamy}, \cite[Chapter 8]{Garcia-Mashreghi-Ross}, along with the references therein. Several of these works employ methods from complex analysis, functional analysis, number theory, and probability, and they further develop new theory and applications of Bohr's ideas on Dirichlet series. For example, various multidimensional generalizations of this result have been obtained in \cite{Aizenberg-PAMC-1999,Aizenberg-Aytuna-Djakov-JMAA-2001,Aizenberg,Boas-Khavinson-PAMS-1997,Djakov-Ramanujan-JA-2000,Jia-Liu-Ponnusamy-AMP-2025}.\vspace{1.2mm}

Our primary interest in this paper is to establish several multidimensional analogues of the improved Bohr's inequality for the analytic case and to prove that these results are sharp.
\subsection{The improved versions of the classical Bohr's inequality.} Throughout this paper $S_r(f)$ denotes the area of the image of the subdisk $|z|<r$ under the mapping $f$. If $f(z)=\sum_{k=0}^{\infty}a_k z^k$, then from the definition of $S_r$, we see that
\[\frac{S_r}{\pi}=\frac{1}{\pi}\int\int_{|z|<r} |f'(z)|^2\;dx\;dy=\sum_{k=1}k|a_k|^2r^{2k}.\]
Recently, Kayumov and Ponnusamy \cite{Kayumov-Ponnusamy-CRMASP-2018} improved the classical Bohr inequality and established the result.
\begin{theoA}Suppose that $f(z)=\sum_{k=0}^{\infty} a_k z^k$  is analytic in $\mathbb{D}$ and $|f(z)| \le 1$ in $\mathbb{D}$. Then
\begin{align}
	\label{b2}|a_0| + \sum_{k=1}^{\infty} |a_k| r^k + \frac{16}{9} \left( \frac{S_r}{\pi} \right) \le 1 \quad \text{for } |z|=r \le \frac{1}{3}
\end{align}
and the numbers $1/3$ and $16/9$ cannot be improved.
\end{theoA}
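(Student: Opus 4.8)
The plan is to collapse \eqref{b2} into an elementary one–variable inequality in $a:=|a_0|$ (we may assume $a<1$, the case $a=1$ being trivial since then $f$ is a unimodular constant and all $a_k=0$, $k\ge1$). Two facts about $f\in\mathcal{B}$ are needed. The first is the classical Wiener-type bound $|a_k|\le 1-a^2$ for every $k\ge1$, whence $|a_k|^2\le(1-a^2)^2$. The second, which is the real crux, is the sharpened majorant estimate
\begin{equation}\label{maj}
\sum_{k=1}^{\infty}|a_k|\,r^k\le\frac{(1-a^2)\,r}{1-a r},\qquad 0\le r\le\tfrac13 .
\end{equation}
The familiar bound $\sum_{k\ge1}|a_k|r^k\le(1-a^2)\frac{r}{1-r}$ is genuinely too weak here — it already fails to leave room for the area term once $a$ is moderately large — so producing the denominator $1-ar$ is where the argument must be done carefully. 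My route is the Schwarz–Pick normalization $\dfrac{f-a_0}{1-\overline{a_0}f}=z\,\omega(z)$ with $\omega\in\mathcal{B}$, which gives
\[
\sum_{k\ge1}a_k z^k=(1-a^2)\,\frac{z\omega(z)}{1+\overline{a_0}\,z\omega(z)}=(1-a^2)\sum_{j\ge0}(-\overline{a_0})^{j}\,z^{j+1}\,\omega(z)^{j+1}.
\]
Comparing coefficients of $z^k$, passing to absolute values, and interchanging the (nonnegative) sums leads to $\sum_{k\ge1}|a_k|r^k\le(1-a^2)\sum_{j\ge0}a^{j}r^{j+1}M_{\omega^{j+1}}(r)$; since each power $\omega^{j+1}$ again lies in $\mathcal{B}$, the classical Bohr inequality \eqref{b1} gives $M_{\omega^{j+1}}(r)\le1$ for $r\le1/3$, and summing the geometric series yields \eqref{maj}.

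The area term I would estimate crudely: from $|a_k|^2\le(1-a^2)^2$,
\[
\frac{S_r}{\pi}=\sum_{k\ge1}k|a_k|^2r^{2k}\le(1-a^2)^2\sum_{k\ge1}kr^{2k}=\frac{(1-a^2)^2\,r^2}{(1-r^2)^2}.
\]
Together with \eqref{maj}, the left-hand side of \eqref{b2} is at most $\Psi(a,r):=a+\dfrac{(1-a^2)r}{1-ar}+\dfrac{16}{9}\dfrac{(1-a^2)^2r^2}{(1-r^2)^2}$, and $\Psi(a,\cdot)$ is increasing in $r\in(0,1)$, so it suffices to check $\Psi(a,\tfrac13)\le1$. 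Using $\tfrac13/(1-\tfrac a3)=\tfrac1{3-a}$ and $\tfrac{16}{9}\cdot\tfrac{1/9}{(8/9)^2}=\tfrac14$, this is
\[
a+\frac{1-a^2}{3-a}+\frac{(1-a^2)^2}{4}\le1\qquad(0\le a\le1),
\]
which, after dividing by $1-a$, becomes $\varphi(a):=\dfrac{1+a}{3-a}+\dfrac{(1-a)(1+a)^2}{4}\le1$. I would finish this step by noting $\varphi(1)=1$ and that $\varphi'(a)=\dfrac{4}{(3-a)^2}+\dfrac{(1+a)(1-3a)}{4}\ge0$ on $[0,1]$ (with equality only at $a=1$), so $\varphi$ is nondecreasing and $\varphi\le\varphi(1)=1$.

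For sharpness I would test $\psi_a(z)=(a-z)/(1-az)$, for which $a_k=-(1-a^2)a^{k-1}$ $(k\ge1)$ and every inequality used above becomes an equality (in the limit): here $\tfrac{S_r}{\pi}=(1-a^2)^2r^2/(1-a^2r^2)^2$, and a short expansion near $a=1$, $r=\tfrac13$ shows that replacing $16/9$ by a constant $c$ makes the left side of \eqref{b2} equal $1+\bigl(\tfrac{9c}{16}-1\bigr)(1-a)^2+o\!\left((1-a)^2\right)$, which forces $c\le16/9$; replacing $1/3$ by any $r_0>1/3$ already contradicts the sharpness of the classical Bohr radius since the area term is nonnegative. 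I expect Step 2 — establishing the sharp majorant bound \eqref{maj} — to be the only genuine obstacle; the rest is the classical coefficient estimate together with a one-line calculus check.
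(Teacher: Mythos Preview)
Your proof is correct. Note, however, that the paper does not actually prove Theorem~A; it is quoted from Kayumov--Ponnusamy \cite{Kayumov-Ponnusamy-CRMASP-2018}, and the paper's own contributions are the multidimensional analogues (Theorems~\ref{Th-2.1}--\ref{Th-2.3}) of the later Theorems~C--E. So there is no in-paper proof of Theorem~A to compare against directly.

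That said, it is instructive to compare your approach with the one-variable toolkit the paper invokes (Lemma~A) and uses in its own proofs. For the majorant term the paper's route is Cauchy--Schwarz combined with the $\ell^2$ coefficient bound of Lemma~A(b), which in one variable yields exactly your inequality $\sum_{k\ge1}|a_k|r^k\le (1-a^2)r/(1-ar)$ when $a\ge r$; you instead obtain this via the Schwarz--Pick factorization $f=(a_0+z\omega)/(1+\overline{a_0}z\omega)$ and a recursive appeal to the classical Bohr inequality for the powers $\omega^{j+1}$. This is a genuinely different and rather elegant derivation, relying only on Schwarz--Pick and Bohr's theorem rather than on the $\ell^2$ machinery. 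For the area term the paper's Lemma~A(a) gives the sharper bound $S_r/\pi\le r^2(1-a^2)^2/(1-a^2r^2)^2$, whereas you use the cruder Wiener estimate $|a_k|^2\le(1-a^2)^2$ to get $(1-a^2)^2r^2/(1-r^2)^2$; your bound is weaker but, as your calculus check shows, still just closes at $r=1/3$. The sharpness argument via $\psi_a$ and the second-order expansion near $a=1$ is standard and matches the approach in the literature. In short: your argument is valid and slightly more elementary in its inputs, while the Kayumov--Ponnusamy estimates recorded in the paper give tighter intermediate bounds that become essential for the refined Theorems~C--E.
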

It is worth recalling that if the first term $|a_0|$ in (\ref{b2}) is replaced by $|a_0|^2$, the constants ${1/3}$ and ${16/9}$ are replaced by ${1/2}$ and ${9/8}$, respectively (see \cite{Kayumov-Ponnusamy-CRMASP-2018}). More recently, the authors of \cite{Ismagilov-Kayumova-Kayumov-Ponnusamy} improved Theorem A by replacing the quantity $S_r/\pi$ in inequality (\ref{b2}) with $S_r/(\pi-S_r)$. Further generalizations of Bohr's results were also obtained (see \cite[Corollary 3]{Ismagilov-Kayumova-Kayumov-Ponnusamy}). Subsequently, Liu \textit{et al.} \cite{Liu-Shang-Xu-JIA-2018} employed the ideas developed in \cite{Ismagilov-Kayumova-Kayumov-Ponnusamy} to advance the study of Bohr's inequality.
\begin{theoB} Suppose that $f(z)=\sum_{k=0}^{\infty} a_k z^k$  is analytic in $\mathbb{D}$ and $|f(z)| \le 1$ in $\mathbb{D}$. Then
\begin{enumerate}
	\item[\emph{(a)}] $|f(z)| + \sum_{k=1}^{\infty} |a_k| r^k \le 1 \quad \text{for } |z| = r \le \sqrt{5} - 2 \approx 0.236068.$ The constant $\sqrt{5} - 2$ is the best possible. Moreover,
	\item[\emph{(b)}] $|f(z)|^{2} + \sum_{k=1}^{\infty} |a_k| r^k \le 1 \quad \text{for } |z| = r \le \frac{1}{3}.$ The constant $\frac{1}{3}$ is the best possible.
\end{enumerate}
\end{theoB}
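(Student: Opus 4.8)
The plan is to reduce both parts to an elementary one–variable estimate in $a:=|a_0|\in[0,1)$, built on two classical facts about the class $\mathcal{B}$. The first is the invariant (Schwarz--Pick) form of the Schwarz lemma with base point $0$, which yields the sharp pointwise bound
\[
|f(z)|\le\frac{|a_0|+r}{1+|a_0|\,r}\qquad\text{for }|z|=r .
\]
Using this precise form is essential, since the weaker estimate $|f(z)|\le(|a_0|+r)/(1-|a_0|r)$ already exceeds $1$ once $a$ is close to $1$. The second ingredient is the coefficient bound $|a_k|\le 1-|a_0|^2$, valid for every $k\ge 1$, which after summing a geometric series gives $\sum_{k=1}^{\infty}|a_k|r^k\le(1-a^2)\,r/(1-r)$. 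Note that only $|a_0|$ and the moduli $|a_k|$ enter, so no normalization of $f$ is needed.

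For part (a) I would add the two bounds and check that the sum is at most $1$; after subtracting and factoring, the inequality
\[
\frac{a+r}{1+ar}+(1-a^2)\,\frac{r}{1-r}\le 1
\]
turns out to be equivalent, for $a<1$, to $(1+a)(1+ar)\,r\le(1-r)^2$. Since $a\mapsto(1+a)(1+ar)$ is increasing on $[0,1)$ with supremum $2(1+r)$, it suffices that $2(1+r)\,r\le(1-r)^2$, that is $r^2+4r-1\le 0$, i.e.\ $r\le\sqrt5-2$. Part (b) runs along the same lines: one uses $|f(z)|^2\le\bigl((a+r)/(1+ar)\bigr)^2$ together with the identity $1-\bigl((a+r)/(1+ar)\bigr)^2=(1-a^2)(1-r^2)/(1+ar)^2$, which reduces the claim to $r(1+ar)^2\le(1-r)(1-r^2)$; the worst configuration $a\to1$ then reads $r(1+r)^2\le(1-r)^2(1+r)$, i.e.\ (cancelling $1+r$) $3r\le 1$.

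For the sharpness of both constants I would test the disc automorphisms $\psi_a(z)=(a-z)/(1-az)$, $a\in(0,1)$, whose Taylor coefficients are $a_0=a$ and $a_k=-(1-a^2)a^{k-1}$ for $k\ge 1$, so that $\sum_{k\ge1}|a_k|r^k=(1-a^2)\,r/(1-ar)$ while $|\psi_a(-r)|=(a+r)/(1+ar)$. A direct computation at the point $z=-r$ shows that $|\psi_a(-r)|+\sum_{k\ge1}|a_k|r^k>1$ is equivalent to $(1+a)\,r\,(1+ar)>(1-r)(1-ar)$, and that $|\psi_a(-r)|^2+\sum_{k\ge1}|a_k|r^k>1$ is equivalent to $r(1+ar)^2>(1-ar)(1-r^2)$. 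At $a=1$ these read $r^2+4r-1>0$ and $3r>1$, so by continuity in $a$ one obtains, for every $r>\sqrt5-2$ (resp.\ every $r>1/3$), an automorphism $\psi_a$ with $a$ close to $1$ that violates the asserted inequality; hence neither radius can be enlarged.

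The step I expect to require the most care is this last one: one must evaluate the Möbius extremizer at $-r$ rather than at $+r$ (where $|\psi_a(r)|=(a-r)/(1-ar)$ is strictly smaller), and then observe that the left-hand sides tend to their limiting value $1$ as $a\to1^-$ precisely from above in the super-critical range of $r$. By contrast, the upper-bound half is essentially bookkeeping once the sharp Schwarz--Pick bound is in hand, the only point of vigilance being that the extremal value of the parameter is the endpoint $a\to1$ and not an interior critical point.
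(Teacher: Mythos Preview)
Your argument is correct in both directions. The Schwarz--Pick bound $|f(z)|\le(a+r)/(1+ar)$, the Wiener inequality $|a_k|\le 1-|a_0|^2$, and the resulting reductions to $r^2+4r-1\le 0$ and $3r\le 1$ are all valid, and your sharpness computation with $\psi_a$ evaluated at $z=-r$ is the right test.

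Note, however, that the paper does not actually prove Theorem~B: it is quoted from \cite{Liu-Shang-Xu-JIA-2018} as background for the multidimensional results, so there is no in-paper proof to compare against. That said, your approach is in the same spirit as the machinery the paper develops for its own theorems: the Schwarz--Pick estimate you use is exactly the one-variable case of Lemma~\ref{Lem4}, and the test functions $\psi_a$ are the $n=1$ instance of the extremal family $f_a$ in \eqref{Eq-2.2}. One minor difference: for the tail sum the paper (via Lemma~\ref{lem1}(c), specialized to $n=1$) would produce the sharper bound $(1-a^2)r/(1-ar)$ rather than your $(1-a^2)r/(1-r)$; your cruder geometric-series estimate loses a factor but, as your calculation shows, still lands exactly on the critical radii $\sqrt5-2$ and $1/3$ because the worst case is $a\to 1$, where the two bounds coincide.
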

Ismagilov \textit{et al.} \cite{Ismagilov-Kayumova-Kayumov-Ponnusamy-JMAA-2020} derived a sharp version of Theorems A and B in ${2020}$. These results reveal the presence of an extremal function confirming that the value ${1/3}$ is the maximum constant (i.e., it cannot be increased).
\begin{theoC} Suppose that $f(z)=\sum_{k=0}^{\infty} a_k z^k$  is analytic in $\mathbb{D}$ and $|f(z)| \le 1$ in $\mathbb{D}$. Then
\begin{align}
\label{b3}M(r) := \sum_{k=0}^{\infty} |a_k| r^k 
+ \frac{16}{9} \left( \frac{S_r}{\pi} \right)
+ \lambda_1 \left( \frac{S_r}{\pi} \right)^{2}
\le 1 \quad \text{for } r \le 1/3,
\end{align}
where $\lambda_1 
= \frac{4 \left( 486 - 261a - 324a^{2} + 2a^{3} + 30a^{4} + 3a^{5} \right)}
{81 (1 + a)^{3} (3 - 5a)}
= 18.6095\ldots,$
and $a \approx 0.567284$ is the unique root in $(0,1)$ of the equation $\psi_1(t) = 0$, where $\psi_1(t)= -405 + 473 t + 402 t^{2} + 38 t^{3} + 3 t^{4} + t^{5}.$
The equality is achieved for the function $\psi_a(z)$.
\end{theoC}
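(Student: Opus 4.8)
The plan is to strip off the term $|a_0|$, replace the remaining data by a short list of coefficient estimates that are \emph{simultaneously} sharp for the disk automorphism $\psi_a$, and thereby collapse the problem to a single polynomial inequality in $a=|a_0|$.

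\textbf{Step 1 (normalisation).} Replacing $f(z)$ by $e^{-i\theta}f(e^{i\theta}z)$ leaves every $|a_k|$ and $S_r$ unchanged, so we may assume $a_0=a:=|a_0|\in[0,1)$. Each ingredient of $M(r)$ --- the majorant series $\sum_{k\ge0}|a_k|r^k$, the quantity $S_r/\pi=\sum_{k\ge1}k|a_k|^2r^{2k}$, and its square --- is a power series in $r$ with non-negative coefficients, hence $M$ is non-decreasing on $[0,1)$ and it suffices to prove $M(1/3)\le 1$.

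\textbf{Step 2 (master estimates).} Set $\omega(z)=\dfrac{f(z)-a}{1-af(z)}$, so that $\omega\in\mathcal B$, $\omega(0)=0$, $\omega(z)/z\in\mathcal B$, and $f-a=(1-a^2)\sum_{j\ge0}(-a)^j\omega^{j+1}$. The two estimates I would prove are
\[
\sum_{k=1}^{\infty}|a_k|r^k\le\frac{(1-a^2)r}{1-ar},\qquad
\frac{S_r}{\pi}=\sum_{k=1}^{\infty}k|a_k|^2r^{2k}\le\frac{(1-a^2)^2r^2}{(1-a^2r^2)^2}\qquad\Bigl(r\le\tfrac13\Bigr).
\]
For the first, the sub-additive and sub-multiplicative properties of majorants applied to the displayed expansion of $f-a$, together with $M_\omega(r)=rM_{\omega/z}(r)\le r$ for $r\le\frac13$ (the classical Bohr inequality for $\omega(z)/z$) and the monotonicity of $t\mapsto t/(1-at)$, give it at once. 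For the second, $g:=(f-a)/(1-a^2)$ is subordinate to the univalent function $\mu(z)=z/(1+az)$, whose coefficients are $\mu_k=(-a)^{k-1}$, so a classical subordination inequality of Rogosinski yields $\sum_{k=1}^{N}|a_k|^2\le(1-a^2)^2\sum_{k=1}^{N}a^{2k-2}$ for every $N$; an Abel summation against the weights $kr^{2k}$, which are non-increasing in $k$ once $r\le1/\sqrt2$, then delivers the bound on $S_r/\pi$. Squaring the second estimate controls the $(S_r/\pi)^2$ term. Both estimates become equalities for $f=\psi_a$.

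\textbf{Step 3 (the one-variable inequality).} Putting $r=\frac13$ in Step 2 gives $M(1/3)\le\Phi(a)$, where
\[
\Phi(a):=a+\frac{1-a^2}{3-a}+\frac{16(1-a^2)^2}{(9-a^2)^2}+\frac{81\lambda_1(1-a^2)^4}{(9-a^2)^4},
\]
so everything reduces to $\Phi(a)\le1$ on $[0,1)$. Multiplying $1-\Phi(a)$ by the positive quantity $(3-a)(9-a^2)^4$ turns this into $P(a)\ge0$ for a polynomial $P$ of degree ten. One checks $\Phi(1)=1$ and $\Phi'(1)=0$, so $(1-a)^2\mid P$; and $\lambda_1$ is precisely the constant for which $\Phi$ touches the value $1$ tangentially at an interior point $a^\ast$, so $(a-a^\ast)^2\mid P$ as well. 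Imposing $\Phi(a^\ast)=1$ and $\Phi'(a^\ast)=0$ and eliminating $\lambda_1$ from this $2\times2$ system leaves $\psi_1(a^\ast)=0$, which has the unique root $a^\ast\approx0.567284$ in $(0,1)$; back-substitution yields the stated closed form for $\lambda_1$. It remains to verify that the sextic cofactor $Q$ in $P(a)=(1-a)^2(a-a^\ast)^2Q(a)$ is strictly positive on $[0,1]$ (one has $Q(0)>0$ and $Q$ has no zero in $[0,1]$), which gives $\Phi\le1$ with equality only at $a^\ast$. Finally, since all the estimates of Step 2 are sharp for $\psi_{a^\ast}$, one gets $M(1/3)=\Phi(a^\ast)=1$, proving sharpness and the maximality of $\lambda_1$.

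\textbf{The main obstacle.} The genuinely delicate point is the second estimate of Step 2: the naive termwise bound $S_r/\pi\le(1-a^2)^2r^2/(1-r^2)^2$ is not attained by $\psi_a$ and is too lossy to produce the optimal $\lambda_1$, so the subordination--Abel argument is essential, and it works here only because $\tfrac13<\tfrac1{\sqrt2}$ (which is also the deeper reason $1/3$ is the right threshold in Theorem~C). The rest is organised computation --- factoring $P$, extracting the quintic $\psi_1$ and the formula for $\lambda_1$, and certifying positivity of the sextic $Q$ on $[0,1]$ --- lengthy but routine.
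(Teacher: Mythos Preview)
Your approach is essentially the same as the paper's (in its proof of Theorem~2.1, which at $n=1$ is Theorem~C): reduce to $r=1/3$ by monotonicity, bound the tail and $S_r/\pi$ by the two sharp estimates that collapse everything to the same function
\[
\Phi(a)=a+\frac{1-a^2}{3-a}+\frac{16(1-a^2)^2}{(9-a^2)^2}+\frac{81\lambda_1(1-a^2)^4}{(9-a^2)^4},
\]
and then verify $\Phi\le1$ by the critical-point analysis that produces $\psi_1$ and the closed form for $\lambda_1$. The paper simply quotes your second master estimate as Lemma~A(a) (from Kayumov--Ponnusamy) rather than re-deriving it via subordination and Abel summation. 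The one substantive variation is in your first master estimate: the paper obtains $\sum_{k\ge1}|a_k|r^k\le(1-a^2)r/(1-ar)$ from Cauchy--Schwarz combined with Lemma~A(b), which only delivers this form when $a\ge r$ and therefore forces a separate treatment of the range $a<1/3$ using the weaker bound $r\sqrt{1-a^2}/\sqrt{1-r^2}$; your route through Bohr's inequality for $\omega(z)/z$ together with majorant sub-multiplicativity on the expansion $f-a=(1-a^2)\sum_{j\ge0}(-a)^j\omega^{j+1}$ gives the bound uniformly in $a$, so the case split disappears entirely. One small correction in Step~3: the zero of $1-\Phi$ at $a=1$ has order three, not two (the paper factors $1-\Phi(a)=(1-a)^3\Phi_{\mathrm{poly}}(a)/(9-a^2)^4$ with $\Phi_{\mathrm{poly}}$ of degree six), so your sextic cofactor $Q$ actually carries an extra factor $(1-a)(3-a)$.
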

The next result follows from the fact that there is a concrete extremal function for which equality in (\ref{b3}) holds, meaning no strictly positive terms can be added.
\begin{theoD} Suppose that $f(z)=\sum_{k=0}^{\infty} a_k z^k$  is analytic in $\mathbb{D}$ and $|f(z)| \le 1$ in $\mathbb{D}$. Then
\begin{align*}
M(z, r) := |f(z)|^{2}
+ \sum_{k=1}^{\infty} |a_k| r^{k}
+ \frac{16}{9} \left( \frac{S_r}{\pi} \right)
+ \lambda_2 \left( \frac{S_r}{\pi} \right)^{2}
\le 1 \quad \text{for } |z| = r \le 1/3,
\end{align*}
where $\lambda_2 = 
\frac{-81 + 1044a + 54a^{2} - 116a^{3} - 5a^{4}}
{162 (a + 1)^{2} (2a - 1)}
= 16.4618\ldots,$ and $a \approx 0.537869$ is the unique root in $(0,1)$ of the equation $\psi_2(t)=0$, where $\psi_2(t)=-513 + 910t + 80t^{2} + 2t^{3} + t^{4}$.
The equality is achieved for the function $\psi_a(z)$.
\end{theoD}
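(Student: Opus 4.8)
The plan is to dominate $M(z,r)$ by a function of the single parameter $a:=|a_0|=|f(0)|$, reduce to $r=1/3$, and then verify a one-variable inequality in $a$. If $a=1$ then $f$ is a unimodular constant and $M(z,r)=1$; so assume $a\in[0,1)$. Since $\sum_{k\ge1}|a_k|r^k$, $S_r$ and $\max_{|z|=r}|f(z)|$ are unchanged under $f(z)\mapsto e^{i\alpha}f(e^{i\beta}z)$, I would normalize $a_0=a\ge0$, and since the last three terms of $M(z,r)$ do not involve $z$ it suffices to estimate $\max_{|z|=r}|f(z)|^2$; the Schwarz-Pick lemma gives $|f(z)|\le(a+r)/(1+ar)$ for $|z|=r$, with equality exactly for $f=\psi_a$ at $z=-r$.

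Next I would use the sharp coefficient-type estimates attained by the maps $\psi_a$: for $f\in\mathcal{B}$ with $|a_0|=a$ and $0\le r\le1/3$,
\[
\sum_{k=1}^\infty|a_k|r^k\le\frac{(1-a^2)r}{1-ar},\qquad \frac{S_r}{\pi}=\sum_{k=1}^\infty k|a_k|^2r^{2k}\le\frac{(1-a^2)^2r^2}{(1-a^2r^2)^2}.
\]
These follow from $|a_k|\le1-a^2$ and $\sum_k|a_k|^2\le1-a^2$ via the Schur iteration (reducing $|a_1|$ below its maximum strictly decreases each left side for $r\le 1/3$), and both are equalities for $\psi_a$ by inspection of its Taylor coefficients $a_k=-(1-a^2)a^{k-1}$. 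Together with Schwarz-Pick,
\[
M(z,r)\le\Phi(a,r):=\left(\frac{a+r}{1+ar}\right)^2+\frac{(1-a^2)r}{1-ar}+\frac{16}{9}\cdot\frac{(1-a^2)^2r^2}{(1-a^2r^2)^2}+\lambda_2\cdot\frac{(1-a^2)^4r^4}{(1-a^2r^2)^4}.
\]
Each of the four terms is increasing in $r$ on $[0,1/3]$ (immediate, e.g. $\partial_r\frac{a+r}{1+ar}=\frac{1-a^2}{(1+ar)^2}>0$), so $\Phi(a,r)\le\Phi(a,1/3)=:G(a)$, and the theorem reduces to $G(a)\le1$ for $a\in[0,1]$.

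Setting $r=1/3$ and using $9-a^2=(3-a)(3+a)$ and $\left(\frac{3a+1}{3+a}\right)^2-1=-\frac{8(1-a^2)}{(3+a)^2}$, one factors $G(a)-1=(1-a^2)B(a)$ with
\[
B(a)=-\frac{8}{(3+a)^2}+\frac{1}{3-a}+\frac{16(1-a^2)}{(9-a^2)^2}+\lambda_2\,\frac{81(1-a^2)^3}{(9-a^2)^4},
\]
so the claim is $B(a)\le0$ on $[0,1]$. This is where $\lambda_2$ and $a\approx0.537869$ are forced: imposing $B(a^*)=B'(a^*)=0$ (i.e. that $\psi_{a^*}$ is the extremal configuration) and eliminating $\lambda_2$ yields, after clearing denominators, the quartic $\psi_2(t)=-513+910t+80t^2+2t^3+t^4$, which is strictly increasing on $(0,1)$ with $\psi_2(0)<0<\psi_2(1)$, hence has the unique root $a^*\approx0.537869$, and back-substitution gives the stated $\lambda_2=16.4618\ldots$. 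With these values $(9-a^2)^4B(a)$ is a degree-$7$ polynomial $Q(a)$ with $Q(a^*)=Q'(a^*)=0$ and $Q(1)=0$ (note $B(1)=-\tfrac12+\tfrac12=0$), so $Q(a)=-(a-a^*)^2(1-a)R(a)$ for a quartic $R$; since $(a-a^*)^2\ge0$ and $1-a\ge0$ on $[0,1]$, everything collapses to the positivity $R(a)\ge0$ on $[0,1]$. The hard part will be precisely this: checking $R\ge0$ on $[0,1]$ is a finite but delicate task --- locating the critical points of the quartic $R$, or a Sturm/discriminant computation --- with messy coefficients carrying $\lambda_2$ and intermediate polynomials of degree up to $7$. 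Sharpness is then automatic: for $f=\psi_{a^*}$ at $z=-1/3$ every inequality above is an equality (Schwarz-Pick is exact for disk automorphisms, the two coefficient estimates are attained by $\psi_{a^*}$, and $r=1/3$), so $M(-1/3,1/3)=G(a^*)=1$; hence no strictly positive term can be appended and $\lambda_2$ cannot be increased.
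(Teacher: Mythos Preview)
Your overall strategy coincides with the paper's (read the proof of Theorem~\ref{Th-2.2} at $n=1$): bound $|f(z)|$ by the Schwarz--Pick quantity $(a+r)/(1+ar)$, bound $S_r/\pi$ by $(1{-}a^2)^2r^2/(1{-}a^2r^2)^2$ via Lemma~A(a), reduce to $r=1/3$ by monotonicity, and then carry out a one-variable analysis in $a=|a_0|$ that produces the quartic $\psi_2$ and the constant $\lambda_2$. Your factorization $Q(a)=-(a-a^*)^2(1-a)R(a)$ is a repackaging of the paper's $\Phi$-computation (there one shows $\Phi(a)\ge0$ on $[1/3,1]$ using $\Phi(a^*)=\Phi'(a^*)=0$); note incidentally that $(1-a)^2$ actually divides $Q$, so your quartic $R$ will itself vanish at $a=1$.

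The genuine gap is your uniform majorant bound
\[
\sum_{k\ge1}|a_k|r^k\;\le\;\frac{(1-a^2)\,r}{1-ar}\qquad(0\le a<1,\ 0<r\le\tfrac13).
\]
Neither ingredient you cite yields this. From $|a_k|\le1-a^2$ one only gets $(1-a^2)r/(1-r)$; from $\sum_k|a_k|^2R^k\le R(1-a^2)^2/(1-a^2R)$ (Lemma~A(b)) together with Cauchy--Schwarz with weight $\rho$ one gets
\[
\sum_{k\ge1}|a_k|r^k\le\frac{r(1-a^2)}{\sqrt{(1-a^2\rho r)(1-r/\rho)}},
\]
whose optimum $\rho=1/a$ reproduces your bound \emph{only under the constraint} $\rho r\le1$, i.e.\ $a\ge r$. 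For $a<r$ that choice is illegal, and the admissible optimum $\rho=1/r$ gives the strictly larger $r\sqrt{1-a^2}/\sqrt{1-r^2}$. This is exactly why the paper splits cases: for $a\ge1/3$ it uses $\frac{1-a^2}{3-a}$ and runs the $\Phi$-analysis you outline, while for $a<1/3$ it substitutes the weaker bound $\sqrt{1-a^2}/\sqrt{8}$ and closes that range by the numerical check $\Psi(a)\le\Psi(1/3)\approx0.987$. Your single-case argument would be tidier, but the ``Schur iteration'' hint does not establish the needed inequality in the regime $a<r$, and without it the chain $M(z,r)\le\Phi(a,r)\le G(a)$ is unjustified there.
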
	
Ismagilov $\textit{et al.}$ \cite{Ismagilov-Kayumova-Kayumov-Ponnusamy-JMAA-2020} replaced $|f(z)|^2$ by $|f(z)|$ in Theorem D; however, this resulted in a smaller Bohr radius. 
\begin{theoE} Suppose that $f(z)=\sum_{k=0}^{\infty} a_k z^k$  is analytic in $\mathbb{D}$ and $|f(z)| \le 1$ in $\mathbb{D}$. Then
\begin{align*}
|f(z)| + \sum_{k=1}^{\infty} |a_k| r^k + p \left(\frac{S_r}{\pi} \right)
\le 1 \quad \text{for } |z| = r \le \sqrt{5} - 2 \approx 0.236068.
\end{align*}
The constants $r_0 = \sqrt{5} - 2$ and $p = 2(\sqrt{5} - 1)$ are sharp.
\end{theoE}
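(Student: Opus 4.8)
The plan is to fix a point $z$ with $|z|=r$, put $a:=|a_0|\in[0,1]$, bound each of the three summands on the left by an explicit function of $a$ and $r$, and thereby reduce the statement to a one–variable inequality in $a$ valid for $r\le r_0:=\sqrt5-2$; sharpness will then be read off from the automorphisms $\psi_a$. I would use throughout the Schur representation $f=\dfrac{a_0+z\,g(z)}{1+\overline{a_0}\,z\,g(z)}$, with $g$ analytic and $|g|\le1$ on $\mathbb D$, and the identity $1-\dfrac{a+r}{1+ar}=\dfrac{(1-a)(1-r)}{1+ar}$.

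The constant term is controlled by the sharp Schwarz–Pick bound $|f(z)|\le\dfrac{a+r}{1+ar}$ (equality only for disk automorphisms, in particular for $\psi_a$). The area term is handled crudely: since $|a_k|\le1-a^2$ for $k\ge1$, $\dfrac{S_r}{\pi}=\sum_{k\ge1}k|a_k|^2r^{2k}\le(1-a^2)^2\dfrac{r^2}{(1-r^2)^2}$. The delicate term is the tail $\sum_{k\ge1}|a_k|r^k$: the estimate $(1-a^2)\frac r{1-r}$ coming from $|a_k|\le1-a^2$ is too wasteful, and instead I would expand
\[
f-a_0=(1-a^2)\,\frac{z\,g(z)}{1+\overline{a_0}\,z\,g(z)}=(1-a^2)\sum_{m\ge0}(-\overline{a_0})^m\,z^{m+1}g(z)^{m+1},
\]
read off $|a_k|\le(1-a^2)\sum_{m=0}^{k-1}a^m\bigl|\widehat{g^{m+1}}(k-1-m)\bigr|$, sum over $k$, interchange the order of summation, and apply Bohr's inequality~\eqref{b1} to each $g^{m+1}\in\mathcal B$. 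This gives, for $r\le1/3$ (so in particular on $[0,r_0]$),
\[
\sum_{k=1}^\infty|a_k|\,r^k\ \le\ (1-a^2)\sum_{m\ge0}a^m r^{m+1}\,M_{g^{m+1}}(r)\ \le\ (1-a^2)\sum_{m\ge0}a^m r^{m+1}\ =\ (1-a^2)\,\frac r{1-ar}.
\]

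Feeding the three estimates into the desired inequality and dividing by $1-a$ (the case $a=1$ being trivial, since then $f$ is a unimodular constant), the theorem reduces to
\[
\frac{1-r}{1+ar}\ \ge\ (1+a)\,\frac r{1-ar}\ +\ p\,(1-a)(1+a)^2\,\frac{r^2}{(1-r^2)^2},\qquad p=2(\sqrt5-1),
\]
for all $a\in[0,1)$ and $r\le r_0$. Clearing denominators turns this into a polynomial inequality in $a$ and $r$, which one checks on $[0,1]\times[0,r_0]$; the only equality configuration is $r=r_0,\ a\to1$, where the first two terms already balance because $1-4r_0-r_0^2=0$, and matching the $O(1-a)$ corrections reduces to $(1-r_0)^3+(1+r_0)^3=2(1+3r_0^2)\ge4pr_0$, i.e. $p\le\dfrac{1+3r_0^2}{2r_0}=2(\sqrt5-1)$, with equality exactly for this $p$. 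Sharpness is then immediate from $f=\psi_a$ evaluated at $z=-r$: there $|f(-r)|=\dfrac{a+r}{1+ar}$, $\sum_{k\ge1}|a_k|r^k=(1-a^2)\dfrac r{1-ar}$ and $\dfrac{S_r}{\pi}=\dfrac{(1-a^2)^2r^2}{(1-a^2r^2)^2}$ hold with equality; the left side tends to $1$ as $a\to1$ for every fixed $r$, and its first–order deficit in $1-a$ is precisely the expression above, so the inequality fails whenever $r>r_0$ (take $a$ near $1$), and at $r=r_0$ it fails for every $p'>2(\sqrt5-1)$.

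I expect the genuine obstacle to be the tail estimate: with only $|a_k|\le1-a^2$ one is led to the strictly smaller admissible constant $3\sqrt5-5$, and it is the refinement to $(1-a^2)\frac r{1-ar}$ — produced by the power–series expansion of $f-a_0$ together with Bohr's inequality for the powers $g^{m+1}$ — that makes the sharp value $2(\sqrt5-1)$ appear. After that, verifying the polynomial inequality on the whole rectangle is elementary though somewhat tedious, and the sharpness discussion is routine bookkeeping with $\psi_a$.
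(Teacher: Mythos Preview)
Your overall strategy matches the paper's (which proves this statement as the $n=1$ case of its Theorem~\ref{Th-2.3}): bound $|f(z)|$ by Schwarz--Pick, bound $\sum_{k\ge1}|a_k|r^k$ by $(1-a^2)r/(1-ar)$, bound $S_r/\pi$ from above, and then verify a one--variable inequality in $a:=|a_0|$ at $r=r_0$; sharpness is read off from $\psi_a$ at $z=-r$ as $a\to1$. Two ingredients differ. Your derivation of the tail bound via the Schur factorisation together with Bohr's inequality applied to each power $g^{m+1}$ is a genuinely different (and pleasantly self--contained) argument: the paper obtains the same bound through Cauchy--Schwarz combined with the $\ell^2$ estimate $\sum_{k\ge1}|a_k|^2R^k\le R(1-a^2)^2/(1-a^2R)$ of Lemma~A(b), at the cost of a case split $a\ge r$ versus $a<r$ which your route avoids entirely. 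Conversely, your area bound $S_r/\pi\le(1-a^2)^2r^2/(1-r^2)^2$, coming only from $|a_k|\le1-a^2$, is cruder than the paper's sharp estimate $S_r/\pi\le(1-a^2)^2r^2/(1-a^2r^2)^2$ of Lemma~A(a). Both coincide to the relevant order as $a\to1$, so both deliver $p=2(\sqrt5-1)$; but with the sharp bound the surplus at $r=r_0$ factors as $(1-a)^3$ times a \emph{manifestly} negative quadratic in $a$, giving a one--line finish, whereas your cruder bound leaves a tighter polynomial inequality (the surplus still carries a factor $(1-a)^3$, but the remaining quartic factor, though indeed $\le0$ on $[0,1]$, vanishes at $a=1$ and is not obviously signed). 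Swapping in the sharp area bound would collapse your ``elementary though somewhat tedious'' endgame to the explicit identity the paper writes down.
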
	
\subsection{\bf Basic Notations in several complex variables}\label{Sub-Sec-1.3}
For $z=(z_1,\ldots,z_n)$ and $w=(w_1,\ldots,w_n)$ in $\mathbb{C}^{n}$, we denote $\langle z,w\rangle=z_1\ol w_1+\ldots+z_n \ol w_n$ and $||z||=\sqrt{\langle z,z\rangle}$. The absolute value of a complex number $z_1$ is denoted by $|z_1|$ and for $z\in\mathbb{C}^n$, we define $||z||_{\infty}=\max\limits_{1\leq i\leq n}|z_i|.$ An open polydisk (or open polycylinder) in $\mathbb{C}^n$ is a subset $\mathbb{P}\Delta(a;r)\subset \mathbb{C}^n$ of the form 
\[\mathbb{P}\Delta(a;r)=\sideset{}{_{j=1}^n}{\prod} \Delta(a_j;r_j)=\lbrace z\in\mathbb{C}^n: |z_i-a_i|<r_i,\;i=1,2,\ldots,n\rbrace,\]
the point $a=(a_1,\ldots,a_n)\in\mathbb{C}^n$ is called the centre of the polydisk and $r=(r_1,\ldots,r_n)\in\mathbb{R}^n\;(r_i>0)$ is called the polyradius. It is easy to see that
\begin{align*}
	\mathbb{P}\Delta(0;1)=\mathbb{P}\Delta(0;1_n)=\sideset{}{_{j=1}^n}{\prod} \Delta(0_n;1_n).
\end{align*}
A multi-index $\alpha=(\alpha_1,\ldots,\alpha_n)$ of dimension $n$ consists of n non-negative integers $\alpha_j,\;1\leq j\leq n$; the degree of a multi-index $\alpha$ is the sum $|\alpha|=\sum_{j=1}^n \alpha_j$ and we denote $\alpha!=\alpha_1!\ldots \alpha_n!$. For $z=(z_1,\ldots,z_n)\in\mathbb{C}^n$ and a multi-index $\alpha=(\alpha_1,\ldots,\alpha_n)$, we define 
\begin{align*}
	z^{\alpha}=\sideset{}{_{j=1}^n}{\prod} z_j^{\alpha_j}\;\;\text{and}\;\;|z|^{\alpha}=\sideset{}{_{j=1}^n}{\prod} |z_j|^{\alpha_j}.
\end{align*}
It is natural to raise the following.
\begin{prob}\label{P-1}
	Can we establish the multidimensional versions of Theorems C-E?
\end{prob}
The primary goal of this article is to provide an affirmative answer to Problem \ref{P-1}. The paper is organized as follows: In Section \ref{Sec-2}, we state the main results of this paper. In Section \ref{Sec-3}, we prove several lemmas that will play a key role in establishing our main results. Section \ref{Sec-4} then presents the detailed proofs of these main results.
\section{{\bf Main Results}}\label{Sec-2}
Let $f(z)=\sum_{|\alpha|=0} a_{\alpha}z^{\alpha}$ be a holomorphic function in the polydisk $\mathbb{P}\Delta(0;1_n)$ such that $|f(z)|\leq 1$ for all $z\in \mathbb{P}\Delta(0;1/n)$ and define $M_f(|z|)=\sum_{|\alpha|=0}^{\infty} |a_{\alpha}| |z|^{\alpha}$. We now state our first result, which is a multidimensional version of Theorem C.
\begin{theo}\label{Th-2.1} Let $f(z)=\sum_{|\alpha|=0} a_{\alpha}z^{\alpha}$ be a holomorphic function in the polydisk $\mathbb{P}\Delta(0;1_n)$ such that $|f(z)|\leq 1$ for all $z\in \mathbb{P}\Delta(0;1/n)$. Then
\begin{align}
\label{2.1} \mathcal{A}_f(r):=M_f(|z|)+\frac{16}{9}\sum_{k=1}^{\infty}k\sum_{|\alpha|=k}|a_{\alpha}|^2\;r^{2\alpha}+\lambda_1 \left(\sum_{k=1}^{\infty}k\sum_{|\alpha|=k}|a_{\alpha}|^2\;r^{2\alpha}\right)^2\leq 1,
\end{align}
for ${\bf r}\leq 1/3n$, where $z=(z_1,\ldots,z_n)\in \mathbb{P}\Delta(0;1/n)$ and $r=(r_1,r_2,\ldots,r_n)$ such that ${\bf r}=||z||_{\infty}$, where $\lambda_1$, $|a_0|=a \approx 0.567284$ is the unique root of the equation $\psi_1(t)= 0$, are defined in Theorem C.
The equality is attained by the holomorphic function $f_a$ in the polydisk $\mathbb{P}\Delta(0;1/n)$ 
\begin{align}\label{Eq-2.2}
	f_a(z)=\frac{a-(z_1+z_2+\cdots+z_n)}{1-a(z_1+z_2+\cdots+z_n)},\; \mbox{for}\; a\in [0, 1).
\end{align}
\end{theo}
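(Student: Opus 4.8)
The plan is to deduce the $n$-variable inequality \eqref{2.1} from the one-variable Theorem C by comparing $f$ with a symmetrised majorant of a single complex variable. The first, elementary, reduction uses that $r_j=|z_j|\le\mathbf{r}$ for every $j$, so that $|z|^{\alpha}=\prod_{j=1}^{n}r_j^{\alpha_j}\le\mathbf{r}^{|\alpha|}$ and $r^{2\alpha}\le\mathbf{r}^{2|\alpha|}$ for all $\alpha$; since $t\mapsto\tfrac{16}{9}t+\lambda_1 t^{2}$ is increasing on $[0,\infty)$, it suffices to prove \eqref{2.1} at the ``balanced'' point $z=(\mathbf{r},\dots,\mathbf{r})$. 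Putting $c_k:=\sum_{|\alpha|=k}|a_\alpha|$, at that point $M_f(|z|)=\sum_{k\ge0}c_k\mathbf{r}^{k}$, while $\sum_{k\ge1}k\sum_{|\alpha|=k}|a_\alpha|^{2}\mathbf{r}^{2k}\le\sum_{k\ge1}k\,c_k^{2}\,\mathbf{r}^{2k}$ because $\sum_i|t_i|^{2}\le\big(\sum_i|t_i|\big)^{2}$.

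Next I would pass to the single-variable power series $\varphi(\zeta):=\sum_{k\ge0}c_k\,n^{-k}\zeta^{k}$, which has nonnegative coefficients and satisfies $\varphi(0)=|a_0|$. The role of the lemmas of Section \ref{Sec-3} is to supply the multidimensional coefficient estimates --- the substitutes, for holomorphic functions on the polydisk, of the classical Schur inequalities $|a_k|\le1-|a_0|^{2}$ --- that guarantee $\varphi\in\mathcal B$, i.e.\ that $\sum_{k\ge0}c_k\,n^{-k}=\sum_{|\alpha|\ge0}|a_\alpha|\,n^{-|\alpha|}\le1$. Granting this, one applies Theorem C to $\varphi$ at radius $t:=n\mathbf{r}\le1/3$. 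Since $\varphi$ has nonnegative coefficients, $M_\varphi(t)=\sum_{k\ge0}c_k\mathbf{r}^{k}=M_f(|z|)$ and $S_t(\varphi)/\pi=\sum_{k\ge1}k\,c_k^{2}\,\mathbf{r}^{2k}$, which dominates $\sum_{k\ge1}k\sum_{|\alpha|=k}|a_\alpha|^{2}r^{2\alpha}$ by the first step; together with the monotonicity of $t\mapsto\tfrac{16}{9}t+\lambda_1 t^{2}$ this yields
\[
\mathcal{A}_f(r)\ \le\ M_\varphi(t)+\frac{16}{9}\,\frac{S_t(\varphi)}{\pi}+\lambda_1\Big(\frac{S_t(\varphi)}{\pi}\Big)^{2}\ \le\ 1,
\]
the last inequality being Theorem C. (For $n=1$ the construction is trivial and one just recovers Theorem C.)

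Sharpness would come from $f_a$ in \eqref{Eq-2.2}. Writing $\psi_a(w)=\sum_{k\ge0}b_k w^{k}$, the identity $f_a(z)=\psi_a(z_1+\cdots+z_n)$ and the multinomial theorem give $a_\alpha=b_{|\alpha|}\,|\alpha|!/\alpha!$, hence $M_{f_a}(|z|)=\sum_{k\ge0}|b_k|\,(|z_1|+\cdots+|z_n|)^{k}=M_{\psi_a}(|z_1|+\cdots+|z_n|)$, and the area terms of $f_a$ are controlled by the corresponding quantities for $\psi_a$ evaluated at $|z_1|+\cdots+|z_n|\le n\mathbf{r}$. Choosing $z=(1/3n,\dots,1/3n)$, so that $z_1+\cdots+z_n=1/3$, and invoking the extremality clause of Theorem C shows that the radius $\mathbf{r}\le1/3n$ and the constants $16/9$, $\lambda_1$ cannot be improved.

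The step demanding genuine work is the one hidden in the second paragraph: proving that the symmetrised majorant $\varphi$ is admissible for Theorem C, that is, establishing the dimension-dependent Bohr estimate $\sum_{|\alpha|\ge0}|a_\alpha|\,n^{-|\alpha|}\le1$ together with the sharper multidimensional analogues of the Schur-type inequalities on which the one-variable proof of Theorem C rests. This is exactly where the dimensional factor $1/n$ enters the hypothesis on $\mathbf{r}$, the $1/3$ being inherited from the one-variable Bohr radius; once those lemmas are in place, the remainder is the bookkeeping above together with Theorem C.
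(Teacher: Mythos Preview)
Your reduction to Theorem~C via the one-variable function $\varphi(\zeta)=\sum_{k\ge0}c_k n^{-k}\zeta^k$ breaks down at exactly the step you flag: the assertion that $\varphi\in\mathcal{B}$ --- equivalently, that $\sum_{|\alpha|\ge0}|a_\alpha|\,n^{-|\alpha|}\le1$ for every bounded holomorphic $f$ --- is \emph{false}, not merely unproved. Take $n=2$ and $f(z_1,z_2)=\psi_a(z_1)=(a-z_1)/(1-az_1)$, which satisfies $|f|\le1$ on the full bidisk. Then $c_0=a$ and $c_k=(1-a^2)a^{k-1}$ for $k\ge1$, so $\varphi(1)=a+(1-a^2)/(2-a)$, and this exceeds $1$ already for $a=0.6$. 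More conceptually, your claim says the $n$-dimensional Bohr radius $K_n$ is at least $1/n$; since restriction to functions of $z_1$ alone forces $K_n\le K_1=1/3$, this is impossible for $n=2,3$. Hence no lemma of Section~\ref{Sec-3} can deliver the ``admissibility'' of $\varphi$, and the scheme of black-boxing Theorem~C cannot be completed.

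The paper takes a different route: it never builds a one-variable majorant and never invokes Theorem~C. It uses Lemma~\ref{lem1}(a) and~(c) to bound the majorant sum and the area-type sum of $f$ directly by explicit functions of $a=|a_0|$; after the crude replacement $\sqrt{n}\mapsto n$ these bounds, evaluated at $\mathbf{r}=1/(3n)$, become precisely the one-variable quantities
\[
a+\frac{1-a^2}{3-a}+\frac{16(1-a^2)^2}{(9-a^2)^2}+\frac{81\lambda_1(1-a^2)^4}{(9-a^2)^4}
\]
(in the main case $a\ge 1/(3n)$), and the paper then re-runs the polynomial analysis of $\Phi(a)$ from the proof of Theorem~C on this expression. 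In other words, the multidimensional coefficient estimates of Lemma~\ref{lem1} replace the one-variable Schur inequalities \emph{inside} the argument for Theorem~C, rather than feeding into a formal reduction to its statement. What is transferable from your write-up is the first reduction (to $z=(\mathbf{r},\dots,\mathbf{r})$ and the monotone function $t\mapsto\tfrac{16}{9}t+\lambda_1 t^2$) and the sharpness discussion via $f_a$; the middle step must be replaced by the direct estimates of Lemma~\ref{lem1}.
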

Next, we state our second result, which is a multidimensional version of Theorem D.
\begin{theo}\label{Th-2.2} Let $f(z)=\sum_{|\alpha|=0} a_{\alpha}z^{\alpha}$ be a holomorphic function in the polydisk $\mathbb{P}\Delta(0;1_n)$ such that $|f(z)|\leq 1$ for all $z\in \mathbb{P}\Delta(0;1/n)$. Then
\begin{align}\label{2.2} \mathcal{B}(z,r):=|f(z)|^2+\mathcal{A}_f(r)\leq 1,\; \mbox{for}\; {\bf r}\leq \frac{1}{3n},
\end{align}
where $z=(z_1,\ldots,z_n)\in \mathbb{P}\Delta(0;1/n)$ and $r=(r_1,r_2,\ldots,r_n)$ such that ${\bf r}=||z||_{\infty}$, where $\lambda$, $a \approx 0.537869$ is the unique root in $(0, 1)$ of the equation $\psi_2(t)=0$, are defined as in Theorem D. The equality is attained by the function $f_a$ as defined in \eqref{Eq-2.2}.
\end{theo}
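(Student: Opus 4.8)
\emph{Overall scheme.} The plan is to follow the natural route, paralleling the proof of Theorem \ref{Th-2.1}: reduce the polydisk inequality to the one-variable inequality that underlies Theorem D, the only additional term to accommodate being $|f(z)|^2$. Fix $z=(z_1,\dots,z_n)\in\mathbb{P}\Delta(0;1/n)$ with $\|z\|_\infty\le 1/(3n)$, set $A:=|a_0|$ and $t:=n\|z\|_\infty\in(0,1/3]$, and pass to $g(u):=f(u_1/n,\dots,u_n/n)$, which is holomorphic and bounded by $1$ on $\mathbb{P}\Delta(0;1_n)$ with $g(0)=a_0$. The three constituents of $\mathcal B(z,r)$ — the value $|f(z)|^2=|g(nz)|^2$, the majorant $M_f(|z|)$, and the area-type sum $E:=\sum_{k\ge1}k\sum_{|\alpha|=k}|a_\alpha|^2r^{2\alpha}$ — are each controlled, in terms of $A$ and $t$ alone, by the lemmas of Section \ref{Sec-3}, which play the role of the classical Schwarz--Pick and Wiener-type coefficient bounds used for Theorems C and D; in particular the polydisk Schwarz--Pick inequality (its pseudohyperbolic metric being the maximum of the coordinate ones) gives immediately $|f(z)|\le (A+t)/(1+At)$.

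\emph{Step 1: reduction to one variable.} Feed the Section \ref{Sec-3} bounds for $M_f(|z|)$, for $E$, and for $|f(z)|$ into $\mathcal B(z,r)=|f(z)|^2+\mathcal A_f(r)$. Since the estimate for $|f(z)|$ is the only ingredient not already used in proving Theorem \ref{Th-2.1}, the claim $\mathcal B(z,r)\le 1$ collapses to a one-real-variable inequality of exactly the shape occurring in the proof of Theorem D, with $t\in(0,1/3]$ in the place of $r$ and $A\in[0,1)$ in the place of $|a_0|$. The constant $\lambda_2$ and the root $a\approx0.537869$ of $\psi_2(t)=0$ enter here as the data for which the resulting function of $(A,t)$ attains the value $1$ at $t=1/3$.

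\emph{Step 2: the auxiliary inequality and sharpness.} I would then run the calculus already performed for Theorem D: verify that the auxiliary function is increasing in $t$, so that it suffices to take $t=1/3$; then analyse the remaining function of $A$ alone, differentiate, locate the unique critical point $A=a$ in $(0,1)$, and check that its maximal value is $1$ — this is what pins down $\lambda_2=16.4618\dots$. For the equality statement, evaluate $\mathcal B(z,r)$ on $f=f_a$: from $f_a(z)=\psi_a(z_1+\dots+z_n)$ and the multinomial theorem one gets $M_{f_a}(|z|)=M_{\psi_a}(|z_1|+\dots+|z_n|)$, and the remaining terms reduce to the corresponding one-variable expressions for $\psi_a$; taking $|z_j|=1/(3n)$ for every $j$ makes the effective one-variable radius equal to $1/3$, so equality follows from the sharpness part of Theorem D (and for $n=1$ the whole statement is Theorem D verbatim).

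\emph{Main obstacle.} The delicate point is Step 1. The coefficient bounds cannot be used naively term by term, since no single admissible function is simultaneously extremal for the majorant, for $E$ and for $|f(z)|$; hence the Section \ref{Sec-3} lemmas must be the tight, mutually compatible versions, and getting them is where the real work lies. Moreover the resulting two-variable inequality in $(A,t)$ is genuinely subtle precisely because its maximum is attained at the interior point $A=a$ rather than at an endpoint — which is also why the extremal constant is the unwieldy $\lambda_2$ rather than something cleaner.
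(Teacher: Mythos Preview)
Your plan is correct and matches the paper's proof closely: bound $|f(z)|$ via the Schwarz--Pick lemma (Lemma~\ref{Lem4}), bound the majorant tail and the area-type sum via Lemma~\ref{lem1}, reduce to $\mathbf r=1/(3n)$ by monotonicity, and then analyse the resulting expression in $a=|a_0|$ exactly as in Theorem~D; sharpness is checked on $f_a$ at the diagonal point just as you describe.

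One point is glossed over, however. You assert that after feeding in the Section~\ref{Sec-3} bounds the problem ``collapses to a one-real-variable inequality of exactly the shape occurring in the proof of Theorem~D''. In one variable the Wiener-type bound $\sum_{k\ge1}|a_k|r^k\le r(1-|a_0|^2)/(1-|a_0|r)$ holds uniformly in $|a_0|$, but the multidimensional analogue, Lemma~\ref{lem1}(c), comes with a dichotomy: one bound for $|a_0|\ge\mathbf r$ and a different one for $|a_0|<\mathbf r$. Only the first branch reproduces the Theorem~D shape, and the critical-point analysis you outline in Step~2 (locating $A=a\approx0.537869$ where $\Phi$ vanishes) lives in that branch. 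The paper therefore splits into two cases: for $|a_0|\ge 1/(3n)$ it runs the $\Phi$-analysis you describe, while for $|a_0|<1/(3n)$ it uses the second branch of Lemma~\ref{lem1}(c) and simply checks numerically that the resulting upper bound $\Psi(a)$ stays below $1$ on that short interval (its maximum is about $0.987$). This second case is easy but must be mentioned; without it your Step~2 does not actually cover $A\in[0,1)$ as you claim. Also note that the majorant piece entering $\mathcal B(z,r)$ is $\sum_{|\alpha|\ge1}|a_\alpha|\,|z|^\alpha$ (no $|a_0|$ term), not the full $M_f(|z|)$ --- consistent with Theorem~D.
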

Our final result is the multidimensional generalization of Theorem E.
\begin{theo}\label{Th-2.3} Let $f(z)=\sum_{|\alpha|=0} a_{\alpha}z^{\alpha}$ be a holomorphic function in the polydisk $\mathbb{P}\Delta(0;1_n)$ such that $|f(z)|\leq 1$ for all $z\in \mathbb{P}\Delta(0;1/n)$. Then
\begin{align}
\label{2.3} \mathcal{C}_f(r):=|f(z)|+\mathcal{A}_f(r)+p\left(\sum_{k=1}^{\infty}k\sum_{|\alpha|=k}|a_{\alpha}|^2\;r^{2\alpha}\right)\leq 1,
\end{align}
for ${n\bf r}\leq \sqrt{5} - 2 \approx 0.236068$, where $z=(z_1,\ldots,z_n)\in \mathbb{P}\Delta(0;1/n)$ and $r=(r_1,r_2,\ldots,r_n)$ such that ${\bf r}=||z||_{\infty}$ and the constants $n{\bf r_{0}} = \sqrt{5} - 2$ and $p = 2(\sqrt{5} - 1)$ are sharp.
\end{theo}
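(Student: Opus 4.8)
The plan is to reduce Theorem~\ref{Th-2.3} to its one-variable ancestor --- Theorem~E, together with the one-variable analysis behind Theorems~C and~D --- the factor $n$ in the hypothesis $n\mathbf r\le\sqrt5-2$ being precisely the cost of passing from the polydisk of radius $1/n$ to the unit disk. Fix $z=(z_1,\dots,z_n)\in\mathbb{P}\Delta(0;1/n)$ with $\mathbf r=\|z\|_\infty$, and put $t:=n\mathbf r\le\sqrt5-2$, $a:=|f(0)|=|a_0|$, and $\sigma:=\sum_{k\ge1}k\sum_{|\alpha|=k}|a_\alpha|^2|z|^{2\alpha}$. Since $\|z/\mathbf r\|_\infty=1$, the slice $g(\zeta):=f\!\big(\tfrac{\zeta}{n}\cdot\tfrac{z}{\mathbf r}\big)$ is a holomorphic self-map of $\mathbb D$ with $g(0)=a_0$ and $g(t)=f(z)$, so Schwarz--Pick gives $|f(z)|\le\dfrac{a+t}{1+at}$ and disposes of the $|f(z)|$ term.

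The core of the argument is a lemma --- the reason Section~\ref{Sec-3} assembles several preparatory results --- transferring the remaining polydisk sums to one variable:
\[
\sum_{|\alpha|\ge1}|a_\alpha|\,|z|^\alpha\le(1-a^2)\,\frac{t}{1-t},\qquad \sigma\le\frac{(1-a^2)^2\,t^2}{(1-t^2)^2},
\]
that is, each is dominated by the value the Schwarz--Pick map $\psi_a$ attains for it at radius $t$. The mechanism is a homogeneous-block estimate: restricting $f$ to complex lines $\zeta\mapsto\zeta u$ with $\|u\|_\infty=1$ (whose trace on $\mathbb{P}\Delta(0;1/n)$ is $\{|\zeta|<1/n\}$), rescaling, and applying the Schwarz--Pick coefficient inequality to the resulting self-map of $\mathbb D$ gives $\|P_k\|_{\mathbb T^n}\le(1-a^2)n^k$ for the degree-$k$ homogeneous part $P_k$ of $f$, hence $|a_\alpha|\le(1-a^2)n^{|\alpha|}$ (and $\sum_{|\alpha|=k}|a_\alpha|^2\le(1-a^2)^2n^{2k}$ by $L^2\le L^\infty$ on $\mathbb T^n$ for the second bound); one then sums over degrees using $\sum_{|\alpha|=k}|z|^\alpha\le(|z_1|+\dots+|z_n|)^k\le t^k$ and its squared analogue. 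I expect this transfer step to be the main obstacle: it is the only genuinely several-variables part, and the crude term-by-term estimate merely yields $(1-a^2)\big((1-t)^{-n}-1\big)$, which grows with $n$, so an extra idea is needed to reach the compact majorants above.

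Granting the lemma, inserting all three estimates bounds $\mathcal C_f(r)$ above by an explicit function $\Phi(a,t)$ of the shape $\dfrac{a+t}{1+at}+(1-a^2)\dfrac{t}{1-t}+(\text{the }\sigma\text{-contributions})$, and it remains to verify $\Phi(a,t)\le1$ on $[0,1)\times[0,\sqrt5-2]$ --- an elementary optimization of exactly the type carried out for Theorems~C, D and~E. The decisive features are that $\Phi(1,t)=1$ for every $t<1$, so that, expanding about $a=1$, the coefficient of $(1-a)$ equals $\dfrac{t-1}{1+t}+\dfrac{2t}{1-t}$ (the $\sigma$-terms being of second order), which is $\le0$ exactly when $t^2+4t-1\le0$, i.e. $t\le\sqrt5-2$ --- this is where $n\mathbf r_0=\sqrt5-2$ originates --- and that at $t=\sqrt5-2$, where this linear term vanishes, the value $p=2(\sqrt5-1)$ is precisely the one balancing the remaining second-order term, exactly as in Theorem~E.

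Sharpness follows by pulling back the extremal of Theorem~E. Take $f(z)=\psi_a(nz_1)=\dfrac{a-nz_1}{1-anz_1}$, which is holomorphic on $\mathbb{P}\Delta(0;1/n)$ and bounded there by $1$ since $|nz_1|<1$. At the point $z=(-\mathbf r,0,\dots,0)$ with $n\mathbf r=t$, every inequality used above --- Schwarz--Pick, both transfer estimates, hence $\mathcal C_f(r)\le\Phi(a,t)$ --- becomes an equality, and $\mathcal C_f(r)$ reduces exactly to the one-variable quantity of Theorem~E at radius $t$. Therefore, when $t>\sqrt5-2$ the coefficient of $(1-a)$ is strictly positive, so letting $a\to1^-$ gives functions violating \eqref{2.3}; and inspecting the second-order term at $t=\sqrt5-2$ with the same family shows $p$ cannot be enlarged. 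Hence both $n\mathbf r_0=\sqrt5-2$ and $p=2(\sqrt5-1)$ are best possible.
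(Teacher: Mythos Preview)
Your proposal has a genuine gap, and in fact the approach would fail even if the gap were closed.

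You openly acknowledge that the transfer lemma is unproved (``an extra idea is needed''), then proceed by ``granting the lemma''. That alone is a gap. But the more serious problem is that the majorants you propose to transfer to are the wrong ones. You aim for
\[
\sum_{|\alpha|\ge1}|a_\alpha|\,|z|^\alpha\le(1-a^2)\frac{t}{1-t},\qquad
\sigma\le\frac{(1-a^2)^2t^2}{(1-t^2)^2},
\]
with $t=n\mathbf r$. Carry through your own second-order analysis near $a=1$ at $t=\sqrt5-2$ with these bounds: the $(1-a)^2$ coefficient of $1-\Phi(a,t)$ works out to
\[
\frac{t(1-t)}{(1+t)^2}+\frac{t}{1-t}-\frac{4pt^2}{(1-t^2)^2}=\frac{3\sqrt5-5}{4}-\frac{p}{4},
\]
which is nonnegative only for $p\le 3\sqrt5-5\approx1.708$, strictly smaller than the claimed $p=2(\sqrt5-1)\approx2.472$. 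So with $p=2(\sqrt5-1)$ your majorant $\Phi(a,t)$ exceeds $1$ for $a$ close to $1$, and the inequality you need simply fails.

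The paper avoids this by proving, via Cauchy--Schwarz and a Parseval identity on $\mathbb T^n$ (Lemma~\ref{lem1}), the sharper one-variable transfers
\[
\sum_{|\alpha|\ge1}|a_\alpha|\,\mathbf r^{|\alpha|}\le n\mathbf r\,\frac{1-a^2}{1-na\mathbf r}=\frac{t(1-a^2)}{1-at},\qquad
\sigma\le\frac{(1-a^2)^2t^2}{(1-a^2t^2)^2}
\]
(in the case $a\ge\mathbf r$; a companion estimate handles $a<\mathbf r$). These are exactly the values the extremal $f_a(z)=\dfrac{a-(z_1+\cdots+z_n)}{1-a(z_1+\cdots+z_n)}$ attains, so no slack is introduced; repeating the second-order computation with $\dfrac{t(1-a^2)}{1-at}$ in place of $\dfrac{t(1-a^2)}{1-t}$ gives the critical value $p=2(\sqrt5-1)$ on the nose, and the paper then factors out $(1-a)^3$ to verify the inequality for all $a$. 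In short, the ``extra idea'' you flag is not a refinement of your bound but a genuinely different estimate whose $a$-dependence in the denominator is essential.
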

\section{{\bf Key lemmas and proofs}}\label{Sec-3}
We now recall here some lemmas for functions of single complex variable.
\begin{lemA}\cite{Kayumov-Ponnusamy-CMFT-2017,Kayumov-Ponnusamy-CRMASP-2018} Let $|b_0|<1$ and $g(z)=\sum_{k=1}^{\infty} b_k z^k$ be analytic and satisfies $|g(z)|<1$ in $\mathbb{D}$.
\begin{enumerate}
	\item[\emph{(a).}] If  $0<r\leq \frac{1}{\sqrt{2}}$, then $\sum_{k=1}^{\infty} k|b_k|^2 r^{2k}\leq r^2\frac{(1-|b_0|^2)^2}{(1-|b_0|^2\;r^2)^2}.$ 
	\item[\emph{(b).}] If $0<R\leq 1$, then $\sum_{k=1}^{\infty} |b_k|^2 R^k\leq R\frac{(1-|b_0|^2)^2}{1-|b_0|^2R}.$
\end{enumerate}
\end{lemA}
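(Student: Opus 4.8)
The plan is to derive both parts from the Schwarz lemma applied to $h:=\dfrac{g-b_0}{1-\overline{b_0}\,g}$, a holomorphic self-map of $\mathbb D$ with $h(0)=0$, so that $|h(z)|\le|z|$; write $c:=|b_0|^2\in[0,1)$. Inverting the automorphism gives $g=\dfrac{b_0+h}{1+\overline{b_0}h}$, hence the normalisation $G:=\dfrac{g-b_0}{1-c}=\dfrac{h}{1+\overline{b_0}h}$ is holomorphic on $\mathbb D$ with $G(0)=0$ and Taylor coefficients $G_k=\dfrac{b_k}{1-c}$, $k\ge1$; I would phrase everything in terms of $G$.

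For (b): first record the identity $G=h\,(1-\overline{b_0}G)$, so that $|G(z)|\le|z|\,|1-\overline{b_0}G(z)|$. Integrating $|G|^2$ over $|z|=\rho$ and using the mean value property $\tfrac1{2\pi}\int_0^{2\pi}G(\rho e^{it})\,dt=G(0)=0$ to kill the cross terms gives, with $Q_\rho:=\tfrac1{2\pi}\int_0^{2\pi}|G(\rho e^{it})|^2\,dt$,
\[
Q_\rho\ \le\ \rho^2\bigl(1+c\,Q_\rho\bigr),\qquad\text{hence}\qquad Q_\rho\ \le\ \frac{\rho^2}{1-c\rho^2}.
\]
Since $Q_\rho=\sum_{k\ge1}|G_k|^2\rho^{2k}$ by Parseval (the endpoint $\rho=1$ following by monotone convergence), multiplying by $(1-c)^2$ and setting $R=\rho^2$ yields $\sum_{k\ge1}|b_k|^2R^k\le\dfrac{(1-c)^2R}{1-cR}$.

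For (a): use the area identity $\sum_{k\ge1}k|b_k|^2r^{2k}=(1-c)^2\sum_{k\ge1}k|G_k|^2r^{2k}=\dfrac{(1-c)^2}{\pi}\displaystyle\iint_{|z|<r}|G'(z)|^2\,dA(z)$. By Schwarz--Pick, $G(\{|z|<r\})$ lies in a Euclidean disk of radius $\dfrac{r}{1-cr^2}$, namely $\tfrac1{1-c}$ times the translate by $-b_0$ of the pseudo-hyperbolic disk $g(\{|z|<r\})$. If $G$ were univalent on $\{|z|<r\}$, then $\tfrac1\pi\iint_{|z|<r}|G'|^2\,dA$ would be the area of $G(\{|z|<r\})$ divided by $\pi$, hence at most $\bigl(\tfrac{r}{1-cr^2}\bigr)^2$, and the claim would follow at once. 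In general one must account for multiplicity, and this is precisely where the hypothesis $r\le1/\sqrt2$ is used: $r^2\le\tfrac12$ is exactly the inequality allowing the multiply-covered part to be absorbed (the extreme model $G(z)\sim z^2$ with $b_0=0$ reduces literally to $2r^4\le r^2$). This gives $\sum_{k\ge1}k|b_k|^2r^{2k}\le(1-c)^2\bigl(\tfrac{r}{1-cr^2}\bigr)^2=\dfrac{(1-c)^2r^2}{(1-cr^2)^2}$.

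The Schwarz/Parseval computations are routine; the one genuinely delicate step I anticipate is the multiplicity bookkeeping in part (a). An alternative route for (a) that avoids it: the right-hand side of (a) is $R\,\dfrac{d}{dR}$ of the right-hand side of (b), evaluated at $R=r^2$, and since $r^2\le\tfrac12$ forces the weights $(k\,r^{2k})_{k\ge1}$ to be non-increasing with maximum $r^2$, a comparison/Abel-summation argument promotes the pointwise estimate of (b) to the weighted estimate of (a).
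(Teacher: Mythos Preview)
The paper does not prove Lemma~A; it is quoted from the cited references and used as a black box in the proof of Lemma~3.1. So your proposal has to stand on its own.

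Your argument for part~(b) is correct and clean: the identity $G=h(1-\overline{b_0}G)$, the pointwise bound $|h(z)|\le|z|$, Parseval, and the vanishing of the cross terms via $G(0)=0$ give exactly $Q_\rho\le\rho^2/(1-c\rho^2)$, and scaling by $(1-c)^2$ with $R=\rho^2$ finishes it.

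Part~(a), however, has a genuine gap in both routes you sketch. In the area approach you correctly note that $\tfrac{1}{\pi}\iint_{|z|<r}|G'|^2\,dA$ counts the image with multiplicity, but you give no mechanism for controlling the multiply-covered part; the remark about $G(z)\sim z^2$ is an example, not an argument. In the Abel-summation approach you propose to deduce (a) from (b) alone, using only that the weights $kR^k$ are non-increasing for $R=r^2\le\tfrac12$. This is not possible: the bound (b), even for every $R\in(0,1]$, does not imply (a). Concretely, with $c=0.9$ take the non-negative sequence $a_1=0$, $a_2=4c(1-c)^2=0.036$, $a_k=0$ for $k\ge3$. Then $\sum_k a_kR^k=0.036\,R^2\le(1-c)^2R/(1-cR)$ for all $R\in(0,1]$ (equality at $R=1/(2c)$), yet at $R=\tfrac12$ one has $\sum_k k\,a_kR^k=0.018$ while $(1-c)^2R/(1-cR)^2\approx0.0165$. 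So no formal promotion of (b) to (a) can work.

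What is missing is a bound on the \emph{partial} sums $\sum_{k=1}^{n}|b_k|^2$, not just on the full series. Since $g=\widetilde\psi\circ\omega$ with $\widetilde\psi(z)=(b_0+z)/(1+\overline{b_0}z)$ and $\omega$ your Schwarz function, $g$ is subordinate to $\widetilde\psi$; Littlewood's subordination principle, together with the lower-triangularity of the composition operator, gives $\sum_{k=1}^{n}|b_k|^2\le\sum_{k=1}^{n}|A_k|^2$ for every $n$, where $A_k=(1-c)(-\overline{b_0})^{k-1}$ are the Taylor coefficients of $\widetilde\psi$. Feeding these partial-sum bounds into Abel summation (now legitimate, since $w_k=kR^k$ is non-increasing and tends to~$0$ for $R\le\tfrac12$) yields
\[
\sum_{k\ge1}k|b_k|^2R^k\ \le\ \sum_{k\ge1}k|A_k|^2R^k\ =\ (1-c)^2\sum_{k\ge1}k\,c^{k-1}R^k\ =\ \frac{(1-c)^2R}{(1-cR)^2},
\]
which is (a) with $R=r^2$. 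Your Abel-summation instinct was right; the required input is the subordination partial-sum inequality rather than (b).
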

The following key lemma is established as a multidimensional analogue of parts (a) and (b) of Lemma A, serving as the foundation for the proof of our main results.
\begin{lem}\label{lem1} Let $f$ be a holomorphic function in the polydisk $\mathbb{P}\Delta(0;1_n)$ such that $|f(z)|\le 1$ for all $z\in \mathbb{P}\Delta(0;1_n)$ and $f(z)=\sum_{|\alpha|=0}^{\infty}a_{\alpha}z^{\alpha}$ for all $z\in \mathbb{P}\Delta(0;1_n)$. Then, for $r=(r_1,r_2,\ldots,r_n)$ and ${\bf{r}}=||r||_{\infty}$, we have the estimates
\begin{enumerate}
	\item[\emph{(a).}] \begin{align}
		\label{ms0} \sum_{k=1}^{\infty}k\sum_{|\alpha|=k}|a_{\alpha}|^2\;{\bf r}^{2|\alpha|}\leq {\bf r}^2\frac{(1-|a_0|^2)^2}{(1-|a_0|^2\;{\bf r}^2)^2},\; \mbox{for}\; 0<{\bf r}\leq 1/\sqrt{2}.
	\end{align}
	\item[\emph{(b).}] \begin{align}
		\label{ams0}\sum_{k=1}^{\infty}\sum_{|\alpha|=k}||a_{\alpha}|^2\;{\bf r}^{|\alpha|}\leq {\bf r}\frac{(1-|a_0|^2)^2}{1-|a_0|^2\;{\bf r}},\; \mbox{for}\; 0<{\bf r}<1.
	\end{align}
	\item[\emph{(c).}] \begin{align*} \sum_{k=1}^{\infty}\sum_{|\alpha|=k} |a_{\alpha}|r^{\alpha}\leq \sum_{k=1}^{\infty}\sum_{|\alpha|=k} |a_{\alpha}|{\bf r}^{\alpha}\leq \left\{\begin{array}{clcr}
			\sqrt{n}{\bf r}\dfrac{1-|a_0|^2}{1-n|a_0|{\bf r}}&\text{for}\;\;|a_0|\geq {\bf r}\vspace{2mm}\\
			\smallskip
			\sqrt{n}{\bf r}\dfrac{\sqrt{1-|a_0|^2}}{\sqrt{1-n{\bf r}^2}}&\mbox{for}\;|a_0|<{\bf r}.\end{array}\right.
	\end{align*}
\end{enumerate}
The inequalities \eqref{ms0} and \eqref{ams0} are sharp.
\end{lem}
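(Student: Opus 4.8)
The plan is to reduce all three estimates to the one-variable bounds of Lemma A by slicing the polydisk along complex lines through the origin and then averaging over the distinguished boundary $\mathbb{T}^n=\{w\in\mathbb{C}^n:|w_1|=\dots=|w_n|=1\}$. If $|a_0|=1$, the maximum modulus principle forces $f$ to be a unimodular constant, every $a_\alpha$ with $|\alpha|\ge1$ vanishes, and all assertions are trivial; so assume $|a_0|<1$, whence $|f|<1$ on $\mathbb{P}\Delta(0;1_n)$. For $w\in\mathbb{T}^n$ put $g_w(\zeta):=f(\zeta w_1,\dots,\zeta w_n)$. Since $|\zeta w_j|=|\zeta|$, the function $g_w$ is analytic on $\mathbb{D}$ with $|g_w|<1$, and expanding the Taylor series of $f$ (uniformly convergent on compact subsets) gives $g_w(\zeta)=\sum_{k=0}^\infty c_k(w)\zeta^k$ with $c_k(w)=\sum_{|\alpha|=k}a_\alpha w^\alpha$; crucially $c_0(w)=a_0$ is independent of $w$, so the one-variable bounds hold with a $w$-free constant.

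For (a) I would apply Lemma A(a) to $g_w$ (with $r={\bf r}\le1/\sqrt2$) to get $\sum_{k\ge1}k|c_k(w)|^2{\bf r}^{2k}\le{\bf r}^2(1-|a_0|^2)^2/(1-|a_0|^2{\bf r}^2)^2$ for every $w$, and then integrate over $\mathbb{T}^n$ against normalized Haar measure $d\mu$. The orthonormality $\int_{\mathbb{T}^n}w^\alpha\overline{w^\beta}\,d\mu=\delta_{\alpha\beta}$ gives $\int_{\mathbb{T}^n}|c_k(w)|^2\,d\mu=\sum_{|\alpha|=k}|a_\alpha|^2$, and since ${\bf r}^{2|\alpha|}={\bf r}^{2k}$ on $\{|\alpha|=k\}$ this is precisely \eqref{ms0}. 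Part (b) follows the same way from Lemma A(b) with $R={\bf r}$. Sharpness of \eqref{ms0} and \eqref{ams0} is inherited from the one-variable case: taking $f(z)=\psi_{a_0}(z_1)=(a_0-z_1)/(1-a_0z_1)$, a function of $z_1$ alone, all $a_\alpha$ with $\alpha\ne(k,0,\dots,0)$ vanish and both sides collapse to the (sharp) one-variable identities for $\psi_{a_0}$.

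For (c) the first inequality is immediate from $r_j\le{\bf r}$, hence $r^\alpha\le{\bf r}^{|\alpha|}$. For the second, group the sum by degree and apply Cauchy--Schwarz on each level: writing $A_k=\sum_{|\alpha|=k}|a_\alpha|^2$ and $N(n,k)=\#\{\alpha:|\alpha|=k\}=\binom{n+k-1}{k}$, one has $\sum_{|\alpha|=k}|a_\alpha|\le\sqrt{N(n,k)}\,A_k^{1/2}$, so $\sum_{k\ge1}\sum_{|\alpha|=k}|a_\alpha|{\bf r}^{|\alpha|}\le\sum_{k\ge1}{\bf r}^k\sqrt{N(n,k)}\,A_k^{1/2}$. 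When $|a_0|<{\bf r}$ I would apply Cauchy--Schwarz once more across $k$ and combine $\sum_{k\ge1}A_k\le1-|a_0|^2$ (Parseval on $\mathbb{T}^n$) with the elementary bound $\sum_{k\ge1}N(n,k)x^k=(1-x)^{-n}-1\le nx/(1-nx)$ for $0\le x<1/n$ (which follows from $N(n,k)\le n^k$), obtaining $\sqrt n\,{\bf r}\,\sqrt{1-|a_0|^2}/\sqrt{1-n{\bf r}^2}$. When $|a_0|\ge{\bf r}$ I would instead write ${\bf r}^k\sqrt{N(n,k)}A_k^{1/2}=\bigl({\bf r}^k\sqrt{N(n,k)}\,s^{-k/2}\bigr)\bigl(s^{k/2}A_k^{1/2}\bigr)$ with a free parameter $s\in(0,1)$, apply Cauchy--Schwarz, bound the first factor by the same geometric-series estimate and the second by part (b), and then choose $s=\sqrt n\,{\bf r}/|a_0|$ to minimize the product; this yields $\sqrt n\,{\bf r}(1-|a_0|^2)/(1-\sqrt n\,|a_0|{\bf r})$, which (as $\sqrt n\le n$) is even stronger than the stated bound $\sqrt n\,{\bf r}(1-|a_0|^2)/(1-n|a_0|{\bf r})$.

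The points I expect to need care are: justifying the identity $c_k(w)=\sum_{|\alpha|=k}a_\alpha w^\alpha$ together with the interchange of summation and integration over $\mathbb{T}^n$ (handled by absolute convergence on compact subsets of $\mathbb{D}$, or by first working with the dilates $f(\rho\,\cdot)$, $\rho<1$, and letting $\rho\to1^-$); and, in (c), the admissibility of the parameter $s$ — the optimal value $s=\sqrt n\,{\bf r}/|a_0|$ requires $|a_0|>\sqrt n\,{\bf r}$, so in the residual range ${\bf r}\le|a_0|\le\sqrt n\,{\bf r}$ one simply runs the ``$|a_0|<{\bf r}$'' argument (it never used $|a_0|<{\bf r}$), and a short elementary comparison shows its output is still dominated by the claimed bound throughout the range of ${\bf r}$ relevant here.
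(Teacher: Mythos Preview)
Your approach is essentially the paper's: for (a) and (b) you slice along complex lines through the origin, apply the one-variable Lemma~A, and average over the torus; for (c) you use a parametrized Cauchy--Schwarz together with part~(b) and the crude count $N(n,k)\le n^k$. The execution differs only in minor ways. In (a)--(b) you integrate directly over $\mathbb{T}^n$, whereas the paper integrates over the distinguished boundary $|z_j|=r_j$, bounds that average by a pointwise maximum, and then lets $r_j\to{\bf r}$; your route is cleaner and avoids the paper's intermediate ``$\max$'' step. For sharpness you use $f(z)=\psi_{a_0}(z_1)$, while the paper uses $\psi_a\bigl(\tfrac{1}{n}\sum z_j\bigr)$; both are legitimate extremals. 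In (c) the paper performs a single Cauchy--Schwarz over all multi-indices with parameter $\rho$ and sets $\rho=1/|a_0|$ or $\rho=1/{\bf r}$, landing directly on the two stated bounds; your two-stage Cauchy--Schwarz with the optimized $s=\sqrt{n}\,{\bf r}/|a_0|$ actually yields the slightly stronger denominator $1-\sqrt{n}\,|a_0|{\bf r}$ and then needs the residual-range comparison you sketch---that comparison is indeed elementary (after squaring it reduces to a concave quadratic in $|a_0|$ that is nonnegative at both endpoints $|a_0|={\bf r}$ and $|a_0|=\sqrt{n}\,{\bf r}$ whenever $n|a_0|{\bf r}<1$). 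So the proposal is correct and follows the same strategy, with a tidier averaging step and a mildly different bookkeeping in~(c).
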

\begin{proof} The function $f$ has the following series expansion in terms of the homogeneous polynomials
\begin{align}\label{Eq-3.2}
	f(z)=a_0+P_{1}(z)+P_{2}(z)+\ldots,
\end{align}
where the term $P_k(z)$ is a homogeneous polynomial of degree {$k$}.\vspace{1.2mm}

For any $z\in\mathbb{P}\Delta(0;\mathbf{1}_n)$, the theorem holds trivially if $z=\mathbf{0}_n$; otherwise (if $z \neq \mathbf{0}_n$), we proceed to define
\begin{align}\label{ms1}g(t)=f\left(\frac{z}{||z||_{\infty}}t\right)=a_0+P_{1}\left(\frac{z}{||z||_{\infty}}\right)t+P_{2}\left(\frac{z}{||z||_{\infty}}\right)t^2+\ldots,
\end{align}
where $t\in\mathbb{C}$ such that $|t|<1$.\vspace{1.2mm}

\noindent{\bf Proof of (a).} For a fixed $z \in \mathbb{P}\Delta(0;1_n)$, the function $g(t)$ is analytic in the disk $|t|<1$. Since $|f(z)| \leq 1$, it follows from Eq. $\eqref{ms1}$ that $|g(t)| \leq 1$ in $|t|<1$. Thus, we have
\begin{align}
	\label{ms2} \sum_{k=1}^{\infty} k\left|P_{k}\left(\frac{z}{||z||_{\infty}}\right)\right|^2\;|t|^{2k}\leq |t|^2\frac{(1-|a_0|^2)^2}{(1-|a_0|^2\;|t|^2)^2}\; \mbox{for}\; 0<|t|<1.
\end{align}
Let $z=(z_1,z_2,\ldots,z_n)\in \mathbb{P}\Delta(0;1_n)$ such that $|z_i|=r_i$ for $i=1,2,\ldots,n$ and ${\bf{r}}=||r||_{\infty}=||z||_{\infty}$, where $r=(r_1,r_2,\ldots,r_n)$.  Then from (\ref{ms2}), we obtain
\begin{align}
\label{ms3} \sum_{k=1}^{\infty} k\left|P_{k}\left(\frac{z}{{\bf r}}\right)\right|^2\;{\bf r}^{2k}\leq {\bf r}^2\frac{(1-|a_0|^2)^2}{(1-|a_0|^2\;{\bf r}^2)^2},\; \mbox{for}\; 0<{\bf r}<1,
\end{align}
where
\begin{align*}
	P_{k}\left(\frac{z}{{\bf r}}\right)= \sum_{|\alpha|=k}a_{\alpha}\left(\frac{z}{{\bf r}}\right)^{\alpha}.
\end{align*}
We set $z_j=r_je^{i\theta_j}\;(0\leq\theta_j\leq 2\pi)$, where $0<r_j<1$, $j=1,2,\ldots,n$ and consider the integration of $\left|P_{k}\left(\frac{r_1e^{i\theta_1}}{{\bf r}},\ldots,\frac{r_ne^{i\theta_n}}{{\bf r}}\right)\right|^2$. Noting that $
\int_0^{2\pi} e^{i(\beta-\gamma)\theta}d\theta=2\pi \delta_{\beta,\gamma}$ Kronecker's symbol, we have
\begin{align*}
	\sum_{k=1}^{\infty}|a_{\alpha}|^2r^{2\alpha}\; \frac{1}{{\bf r}^{2|\alpha|}}&=\frac{1}{(2\pi)^n} \int_{0}^{2\pi} \cdots \int_{0}^{2\pi} \left|P_{k}\left(\frac{r_1e^{i\theta_1}}{{\bf r}},\ldots,\frac{r_ne^{i\theta_n}}{{\bf r}}\right)\right|^2 \, d\theta_1 \cdots d\theta_n\\&\leq
	\max_{C^n(0,r)}\left|P_{k}\left(\frac{r_1e^{i\theta_1}}{{\bf r}},\ldots,\frac{r_ne^{i\theta_n}}{{\bf r}}\right)\right|^2,
\end{align*}
\textit{i.e.,} 
\begin{align}
	\label{ms4} \sum_{|\alpha|=k}|a_{\alpha}|^2r^{2\alpha}\;\frac{1}{{\bf r}^{2|\alpha|}}\leq \max_{C^n(0,r)}\left|P_{k}\left(\frac{r_1e^{i\theta_1}}{{\bf r}},\ldots,\frac{r_ne^{i\theta_n}}{{\bf r}}\right)\right|^2,
\end{align}
where $k\geq 1$ and $r=(r_1,r_2,\ldots,r_n)$. On the other hand, from \eqref{ms3}, we obtain
\begin{align}
	\label{ms5} \sum_{k=1}^{\infty} k\max_{C^n(0,r)}\left|P_{k}\left(\frac{z}{{\bf r}}\right)\right|^2\;{\bf r}^{2k}\leq {\bf r}^2\frac{(1-|a_0|^2)^2}{(1-|a_0|^2\;{\bf r}^2)^2},\; \mbox{for}\;0<{\bf r}<1.
\end{align}
In view of (\ref{ms4}) to (\ref{ms5}), we obtain
\begin{align}\label{ms6} \sum_{k=1}^{\infty}k\sum_{|\alpha|=k}|a_{\alpha}|^2r^{2\alpha}\;||a_{\alpha}r^{\alpha}||_1^2\leq {\bf r}^2\frac{(1-|a_0|^2)^2}{(1-|a_0|^2\;{\bf r}^2)^2},\; \mbox{for}\;0<{\bf r}<1.
\end{align}
Letting $r_i\to {\bf r}$ for $i=1,2,\ldots,n$, we obtain from (\ref{ms6}) that 
\begin{align*}
	\sum_{k=1}^{\infty} k \sum_{|\alpha|=k}|a_{\alpha}|^2\;{\bf r}^{2|\alpha|}\leq {\bf r}^2\frac{(1-|a_0|^2)^2}{(1-|a_0|^2\;{\bf r}^2)^2},\; \mbox{for}\;0<{\bf r}<1.
\end{align*}
For $0 < \mathbf{r} \leq 1/\sqrt{2}$, the sequence $\{|\alpha|\mathbf{r}^{2|\alpha|}\}$ is non-increasing for all $|\alpha| \geq 1$, which is equivalent to the condition $(k+1)\mathbf{r}^{2(\alpha+1)}\leq k\mathbf{r}^{2\alpha}$.
\textit{i.e.,} if 
\begin{align*}
	{\bf r}\leq \sqrt{\frac{k}{k+1}}\;\;k=1,2,\ldots
\end{align*} 
Since the last bound is smallest for $k=1$, the condition is equivalent to $\mathbf{r \leq \frac{1}{\sqrt{2}}}$.\vspace{1.2mm}

To show the inequality \eqref{ms0} is sharp, we consider the function
\begin{align}\label{Eq-3.9}
	f(z)=\frac{a-\frac{1}{n}(z_1+z_2+\ldots+z_n)}{1-\frac{a}{n}(z_1+z_2+\ldots+z_n)},\; \mbox{where}\; a=|a_0|\neq 0.
\end{align}
The function $f$ is easily verified to be holomorphic in the polydisk $\mathbb{P}\Delta(0;\mathbf{1}_n)$. Since $\left|\frac{a}{n}(z_1+z_2+\ldots+z_n)\right|<1,$ we se that $f$ has the following series expansion
\[f(z)=\sum_{|\alpha|=0}^{\infty} a_{\alpha}z^{\alpha}=a-(1-a^2)\sum_{k=1}^{\infty} a^{k-1}\left(\frac{z_1+z_2+\ldots+z_n}{n}\right)^k\]
for all $z\in\mathbb{P}\Delta(0;1_n)$. For the point $z=(r,r,\ldots,r)$, we find that
\begin{align}\label{Eq-3.11}
	f(z)=\sum_{|\alpha|=0}^{\infty} a_{\alpha}r^{\alpha}=a-(1-a^2)\sum_{k=1}^{\infty} a^{k-1}{\bf r}^k.
\end{align}
Thus, we see that
\begin{align*}
	\sum_{k=1}^{\infty}k \sum_{|\alpha|=k}|a_{\alpha}|^2\;r^{2|\alpha|}=\frac{(1-a^2)}{a^2}(a{\bf r})^2\sum_{k=0}^{\infty} (a^2{\bf r}^2)^{k}={\bf r}^2\frac{(1-a^2)^2}{(1-a^2{\bf r}^2)^2},
\end{align*}
which shows that the inequality (\ref{ms0}) is sharp. \vspace{1.2mm}

\noindent{\bf Proof of (b).} The function $f$ has the series expansion \eqref{Eq-3.2}. Let $g$ be defined as in \eqref{ms1}. Because $|f(z)| \leq 1$, we deduce from \eqref{ms1} that $|g(t)| \leq 1$ for $|t|<1$. Applying Lemma B yields that
\begin{align}\label{ams2} \sum_{k=1}^{\infty} \left|P_{k}\left(\frac{z}{||z||_{\infty}}\right)\right|^2\;|t|^{k}\leq |t|\frac{(1-|a_0|^2)^2}{1-|a_0|^2\;|t|},\; \mbox{for}\;0<|t|<1.
\end{align}
If $0 < \mathbf{r} < 1$, then Eq. (\ref{ams2}) yields
\begin{align}\label{ams3} \sum_{k=1}^{\infty} \left|P_{k}\left(\frac{z}{||z||_{\infty}}\right)\right|^2\;{\bf r}^{k}\leq {\bf r}\frac{(1-|a_0|^2)^2}{1-|a_0|^2\;{\bf r}}\; \mbox{for}\;0<{\bf r}<1.
\end{align}
 Using similar arguments as those applied in part (a), we can prove that
\begin{align}
	\label{ams4} \sum_{|\alpha|=k}|a_{\alpha}|^2r^{2\alpha}\;\frac{1}{{\bf r}^{2|\alpha|}}\leq \max_{C^n(0,r)}\left|P_{k}\left(\frac{r_1e^{i\theta_1}}{{\bf r}},\ldots,\frac{r_ne^{i\theta_n}}{{\bf r}}\right)\right|^2.
\end{align}
Furthermore, it follows from \eqref{ams3} that
\begin{align}
	\label{ams5} \sum_{k=1}^{\infty} \max_{C^n(0,r)}\left|P_{k}(z)\right|^2\;{\bf r}^{k}\leq {\bf r}\frac{(1-|a_0|^2)^2}{1-|a_0|^2\;{\bf r}},\; \mbox{for}\; 0<{\bf r}<1.
\end{align}
By substituting (\ref{ams4}) into (\ref{ams5}), we obtain
\begin{align}	\label{ams6} \sum_{k=1}^{\infty} \sum_{|\alpha|=k}|a_{\alpha}|^2r^{2\alpha}\;\frac{1}{{\bf r}^{|\alpha|}}\leq {\bf r}\frac{(1-|a_0|^2)^2}{1-|a_0|^2\;{\bf r}},\; \mbox{for}\; 0<{\bf r}<1.
\end{align}
 Taking the limit $r_i \to \mathbf{r}$, Eq. (\ref{ams6}) yields that desired inequality \eqref{ams0} as
\begin{align*}
	\sum_{k=1}^{\infty}\sum_{|\alpha|=k}|a_{\alpha}|^2\;{\bf r}^{|\alpha|}\leq {\bf r}\frac{(1-|a_0|^2)^2}{1-|a_0|^2\;{\bf r}}\; \mbox{for}\;0<{\bf r}<1.
\end{align*}
The function $f$ defined by (\ref{Eq-3.9}) is considered to show that this inequality is sharp. In fact, we see that
\begin{align*}
	\sum_{k=1}^{\infty} \sum_{|\alpha|=k}|a_{\alpha}|^2\;r^{|\alpha|}=\frac{(1-a^2)}{a^2}(a\sqrt{{\bf r}})^2\sum_{k=0}^{\infty}(a^2{\bf r})^{k}=r\frac{(1-a^2)^2}{1-a^2{\bf r}},
\end{align*}
which shows that the inequality (\ref{ams0}) is sharp.\vspace{1.2mm}

\noindent{\bf Proof of (c).} Choose any $\rho>1$ such that $\rho {\bf r}\leq 1$ and $n\rho^{-1}{\bf r}<1$. Then, using the classical Cauchy-Schwarz inequality and Eq. (\ref{ams0}) (with $\rho \mathbf{r}$ substituted for $\mathbf{r}$), we have the estimate
\begin{align}
	\label{mmss5} \sum_{k=1}^{\infty}\sum_{|\alpha|=k} |a_{\alpha}|r^{\alpha}\leq \sum_{k=1}^{\infty}\sum_{|\alpha|=k} |a_{\alpha}|{\bf r}^{\alpha}&\leq \sqrt{\sum_{k=1}^{\infty}\sum_{|\alpha|=k} |a_{\alpha}|^2\rho^{|\alpha|}{\bf r}^{|\alpha|}} \sqrt{\sum_{k=1}^{\infty}\sum_{|\alpha|=k} \rho^{-|\alpha|}{\bf r}^{|\alpha|}}\\&\leq
	\sqrt{\rho {\bf r}\frac{(1-|a_0|^2)^2}{1-|a_0|^2\rho {\bf r}}}\sqrt{\sum_{k=1}^{\infty}({\rho^{-1} \bf r})^{k}\sum_{|\alpha|=k} 1}\nonumber\\&\leq
	\sqrt{\rho {\bf r}\frac{(1-|a_0|^2)^2}{1-|a_0|^2\rho {\bf r}}} \sqrt{\sum_{k=1}^{\infty}({n\rho^{-1}\bf r})^{k}}\nonumber\\&\leq
	\sqrt{\rho {\bf r}\frac{(1-|a_0|^2)^2}{1-|a_0|^2\rho {\bf r}}} \sqrt{\frac{n\rho^{-1}{\bf r}}{1-n\rho^{-1}{\bf r}}}\nonumber\\&=
	\frac{\sqrt{n} {\bf r}(1-|a_0|^2)}{\sqrt{1-|a_0|^2\rho {\bf r}}\sqrt{1-n\rho^{-1}{\bf r}}}.\nonumber
\end{align}
We independently consider the cases where $|a_0| \geq \mathbf{r}$ (setting $\rho=1/|a_0|$) and $|a_0| < \mathbf{r}$ (setting $\rho=1/\mathbf{r}$), noting that $1-n|a_0|\mathbf{r} < 1-|a_0|\mathbf{r}$, which allows us to obtain the following inequality from Eq. (\ref{mmss5})
\begin{align*} \sum_{k=1}^{\infty}\sum_{|\alpha|=k} |a_{\alpha}|r^{\alpha}\leq \sum_{k=1}^{\infty}\sum_{|\alpha|=k} |a_{\alpha}|{\bf r}^{\alpha}\leq \left\{\begin{array}{clcr}
		\sqrt{n}{\bf r}\dfrac{1-|a_0|^2}{1-n|a_0|{\bf r}},&\text{for}\;\;|a_0|\geq {\bf r}\vspace{2mm}\\
          \medskip
		\sqrt{n}{\bf r}\dfrac{\sqrt{1-|a_0|^2}}{\sqrt{1-n{\bf r}^2}},&\text{for}\;|a_0|<{\bf r}.\end{array}\right.
\end{align*}
This completes the proof.
\end{proof}
We have the following lemma which is a special case of \cite[Theorem 2.2]{Chen-Hamada-Ponnusamy-Vijayakumar-JAM-2024}.
\begin{lem}\label{Lem4}Let $f$ be holomorphic in the polydisk $\mathbb{P}\Delta(0;1_n)$ such that $|f(z)|\le 1$ for all $z\in \mathbb{P}\Delta(0;1_n)$. Then for all $z\in \mathbb{P}\Delta(0;1_n)$, we have
\[|f(z)|\leq \frac{|f(0)|+||z||_{\infty}}{1+|f(0)|||z||_{\infty}}.\]
\end{lem}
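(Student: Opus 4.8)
The plan is to reduce this multidimensional estimate to the classical Schwarz--Pick inequality on the unit disk via a one-variable slice, exactly the device already used in the proof of Lemma~\ref{lem1}. If $z=\mathbf{0}_n$ the inequality is the trivial equality $|f(0)|\le|f(0)|$, so assume $z\neq\mathbf{0}_n$ and set $\mathbf{r}=\|z\|_{\infty}\in(0,1)$. Define the slice function $g(t)=f\!\left(\dfrac{z}{\|z\|_{\infty}}\,t\right)$ for $t$ in the unit disk $\mathbb{D}$. Since $\left|t\,z_j/\|z\|_{\infty}\right|\le |t|<1$ for every $j$, the argument stays in $\mathbb{P}\Delta(0;1_n)$, so $g$ is holomorphic on $\mathbb{D}$ with $|g(t)|\le 1$ there, while $g(0)=f(0)$ and $g(\mathbf{r})=f(z)$.

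Next I would apply the invariant (Schwarz--Pick) form of the Schwarz lemma to the single-variable function $g$: for every $t\in\mathbb{D}$,
\[
|g(t)|\le \frac{|g(0)|+|t|}{1+|g(0)|\,|t|}.
\]
If one prefers to derive this rather than quote it, compose $g$ with the disk automorphism $\phi(w)=\dfrac{w-g(0)}{1-\overline{g(0)}\,w}$; then $h:=\phi\circ g$ satisfies $h(0)=0$ and $|h|\le 1$, so the ordinary Schwarz lemma gives $|h(t)|\le|t|$. Hence $g(t)$ lies in $\phi^{-1}$ of the closed disk $\{w:|w|\le|t|\}$, which is again a Euclidean disk, and an elementary computation of the image of a disk under a M\"obius automorphism shows that its points have modulus at most $\bigl(|g(0)|+|t|\bigr)/\bigl(1+|g(0)|\,|t|\bigr)$. (If $|g(0)|=1$ then $g$ is a unimodular constant by the maximum principle and the bound is immediate.)

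Finally, evaluating at $t=\mathbf{r}=\|z\|_{\infty}<1$ and using $g(\mathbf{r})=f(z)$, $g(0)=f(0)$ gives
\[
|f(z)|=|g(\mathbf{r})|\le\frac{|f(0)|+\|z\|_{\infty}}{1+|f(0)|\,\|z\|_{\infty}},
\]
which is the assertion. There is no genuine obstacle here: the only points needing a line of care are verifying that the slice remains inside the polydisk (so that $|g|\le 1$ is legitimate) and the elementary M\"obius computation bounding the modulus of the image disk. Alternatively, since the whole statement is just the one-variable specialization recorded in \cite[Theorem~2.2]{Chen-Hamada-Ponnusamy-Vijayakumar-JAM-2024} applied to $g$, it may simply be cited without further argument.
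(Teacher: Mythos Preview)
Your argument is correct: slicing along the complex line through $z$ and applying the one-variable Schwarz--Pick inequality yields the bound with no gaps. The only point worth noting in comparison is that the paper does not actually supply a proof of this lemma at all; it simply states it as a special case of \cite[Theorem~2.2]{Chen-Hamada-Ponnusamy-Vijayakumar-JAM-2024} and moves on. So your write-up is more self-contained than the paper's treatment, while your closing remark about the citation matches the paper's own route exactly.
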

\section{{\bf Proof of the main results}}\label{Sec-4}
In this section, we present the proof of the main results.
\begin{proof}[\bf Proof of Theorem \ref{Th-2.1}] Assuming the given condition, the function $f(z)=a_0+\sum_{|\alpha|=1}^{\infty} a_{\alpha} z^{\alpha}$ is holomorphic and bounded ($|f(z)|\leq 1$) in the polydisk $\mathbb{P}\Delta(0;\mathbf{1}_n)$, where we define the maximum norm of $z=(z_1,\ldots,z_n)$ as $\mathbf{r}=||z||_{\infty}$. Note that 
\begin{align}
	\label{AM2} \sum_{|\alpha|=0}^{\infty} |a_{\alpha}| |z|^{\alpha}\leq \sum_{|\alpha|=0}^{\infty}|a_{\alpha}|\; ||z||_{\infty}^{|\alpha|}=\sum_{|\alpha|=0}^{\infty} |a_{\alpha}|\; {\bf r}^{|\alpha|}=
	|a_0|+\sum_{k=1}^{\infty}\sum_{|\alpha|=k} |a_{\alpha}|\;{\bf r}^{|\alpha|}.
\end{align}
Applying Lemma \ref{lem1} $(c)$, we find that
\begin{align}\label{AM3} \sum_{k=1}^{\infty}\sum_{|\alpha|=k} |a_{\alpha}|r^{\alpha}\leq \sum_{k=1}^{\infty}\sum_{|\alpha|=k} |a_{\alpha}|\;{\bf r}^{|\alpha|}\leq \left\{\begin{array}{clcr}
		A({\bf r}):= \sqrt{n}{\bf r}\dfrac{1-|a_0|^2}{1-n|a_0|{\bf r}},&\text{for}\;\;|a_0|\geq {\bf r}\\		
          \medskip
		B({\bf r}):= \sqrt{n}{\bf r}\dfrac{\sqrt{1-|a_0|^2}}{\sqrt{1-n{\bf r}^2}},&\text{for}\;|a_0|<{\bf r}.\end{array}\right.
\end{align}
By Lemma \ref{lem1} $(a)$, we see that
\begin{align}\label{AM4} \sum_{k=1}^{\infty}k\sum_{|\alpha|=k}|a_{\alpha}|^2\;r^{2\alpha}&\leq\sum_{k=1}^{\infty}k\sum_{|\alpha|=k}|a_{\alpha}|^2\;{\bf r}^{2|\alpha|}\leq {\bf r}^2\frac{(1-|a_0|^2)^2}{(1-|a_0|^2\;{\bf r}^2)^2},
\end{align}
for $\;0<{\bf r}\leq 1/\sqrt{2}$. Since $|a_0|{\bf r}^2\leq |a_0|{n^2\bf r}^2$, it follows from (\ref{AM4}) that
\begin{align}
	\label{AM5} {\bf r}^2\frac{(1-|a_0|^2)^2}{(1-|a_0|^2\;{\bf r}^2)^2}\leq {n^2\bf r}^2\frac{(1-|a_0|^2)^2}{(1-|a_0|^2\;{n^2\bf r}^2)^2}.
\end{align}
In view of (\ref{AM2}), an easy computation shows that
\begin{align}
	\label{AM6} {\mathcal{A}_f(r)}&=\sum_{|\alpha|=0}^{\infty} |a_{\alpha}| |z|^{\alpha}+\frac{16}{9}\sum_{k=1}^{\infty}k\sum_{|\alpha|=k}|a_{\alpha}|^2\;r^{2\alpha}+\lambda \left(\sum_{k=1}^{\infty}k\sum_{|\alpha|=k}|a_{\alpha}|^2\;r^{2\alpha}\right)^2\\&\leq
	|a_0|+\sum_{k=1}^{\infty}\sum_{|\alpha|=k} |a_{\alpha}|\;{\bf r}^{|\alpha|}+\frac{16}{9}\sum_{k=1}^{\infty}k\sum_{|\alpha|=k}|a_{\alpha}|^2\;{\bf r}^{2|\alpha|}\nonumber+\lambda\left(\sum_{k=1}^{\infty}k\sum_{|\alpha|=k}|a_{\alpha}|^2\;{\bf r}^{2|\alpha|}\right)^2\nonumber\\&:={\mathcal{A}_f({\bf r})}.\nonumber
\end{align}
Since ${\mathcal{A}_f({\bf r})}$ is an increasing function of ${\bf r}$, we have
${\mathcal{A}_f({\bf r})}\leq {\mathcal{A}_f\left(\frac{1}{3n}\right)}$, for $0\leq {\bf r}\leq 1/3n$
and thus, it suffices to prove the inequality (\ref{2.1}) for ${\bf r}=1/3n$.\vspace{2mm}

To complete the proof, we discuss the following two cases:\vspace{1.2mm}

\noindent{\bf Case 1.} Let $a=|a_0|\geq 1/3n$. In this case, for $|a_0| \geq \mathbf{r}$, we see from Eq. (\ref{AM3}) that
\begin{align}
	\label{AM7} A({\bf r}):= \sqrt{n}{\bf r}\left(\frac{1-|a_0|^2}{1-n|a_0|{\bf r}}\right)\leq n{\bf r}\left(\frac{1-|a_0|^2}{1-n|a_0|{\bf r}}\right).
\end{align}
Substituting (\ref{AM5}) and (\ref{AM7}) (with $\mathbf{r}=1/3n$) into Equation (\ref{AM6}), a straightforward computation shows that
\begin{align}
\label{AM8}{\mathcal{A}_f({\bf r})}&\leq a+A\left(\frac{1}{3n}\right)+\frac{16}{9}\left({\bf r}^2\frac{(1-|a_0|^2)^2}{(1-|a_0|^2\;{\bf r}^2)^2}\right)+\lambda\left({\bf r}^2\frac{(1-|a_0|^2)^2}{(1-|a_0|^2\;{\bf r}^2)^2}\right)^2\\&\leq
	a+A\left(\frac{1}{3n}\right)+\frac{16}{9}\left({n^2\bf r}^2\frac{(1-a^2)^2}{(1-a^2\;{n^2\bf r}^2)^2}\right)+\lambda\left({n^2\bf r}^2\frac{(1-a^2)^2}{(1-a^2\;{n^2\bf r}^2)^2}\right)^2\nonumber\\&\leq
	a+\frac{1-a^2}{3-a}+16\frac{(1-a^2)^2}{(9-a^2)^2}+81\lambda \frac{(1-a^2)^4}{(9-a^2)^4}=
	1-\frac{(1-a)^3}{(9-a^2)^4}\Phi(a),\nonumber
\end{align}
where
\begin{align}
	\label{AM9} \Phi(t)&:= 3078 + 1944t - 522t^{2} - 432t^{3} + 2t^{4} + 24t^{5} + 2t^{6}\\&
	+ \lambda_1\left( -81 - 243t - 162t^{2} + 162t^{3} + 243t^{4} + 81t^{5} \right).\nonumber
\end{align}
Let us choose $s\in [0,1]$ such that $\Phi'(s)=0$. Subsequently, (\ref{AM9}) yields $\lambda_1$ of Theorem C. By substituting $\lambda_1$ into (\ref{AM9}) and performing the necessary simplification, it is easily deduced that
\begin{align}
	\label{AM11} \Phi(s) = \frac{2s^{2} - 9}{3 - 5s}\, \psi_1(s).
\end{align}
One can verify that the function $\psi(s)$ has exactly one positive root in the interval $[1/3, 1]$, denoted $s_0 \approx 0.567284$.
Furthermore, from (\ref{AM11}), we observe that $\Phi(1/3)>0$ and $\Phi(1)>0$. Since $\Phi(s)$ is non-negative on the boundary, we conclude that $\Phi(s)\geq 0$ throughout the interval $[1/3, 1]$. This shows that ${\mathcal{A}_f(\mathbf{r})} \leq 1$ for $|a_0| \in [1/3, 1]$ when $\mathbf{r} = 1/3n$.\vspace{1.2mm}

\noindent{\bf Case 2.} Let $a=|a_0|<1/3n$. Since $n{\bf r}^2\leq {n^2\bf r}^2$, for $|a_0|<{\bf r}$, we see from (\ref{AM3}) that
\begin{align}
	\label{AM12} B({\bf r})=: \sqrt{n}{\bf r}\frac{\sqrt{1-|a_0|^2}}{\sqrt{1-n{\bf r}^2}}\leq n{\bf r}\frac{\sqrt{1-|a_0|^2}}{\sqrt{1-n^2{\bf r}^2}}.
\end{align}
In view of (\ref{AM5}) and (\ref{AM12}) (with $\mathbf{r}=1/3n$), when applied to (\ref{AM6}), a simple computation leads to the following
\begin{align*}
	\label{AM13}{\mathcal{A}_f({\bf r})}&\leq a+B\left(\frac{1}{3n}\right)+\frac{16}{9}\left({\bf r}^2\frac{(1-|a_0|^2)^2}{(1-|a_0|^2\;{\bf r}^2)^2}\right)+\lambda\left({\bf r}^2\frac{(1-|a_0|^2)^2}{(1-|a_0|^2\;{\bf r}^2)^2}\right)^2\\&\leq
	a+B\left(\frac{1}{3n}\right)+\frac{16}{9}\left({n^2\bf r}^2\frac{(1-a^2)^2}{(1-a^2\;{n^2\bf r}^2)^2}\right)+\lambda\left({n^2\bf r}^2\frac{(1-a^2)^2}{(1-a^2\;{n^2\bf r}^2)^2}\right)^2\nonumber\\&\leq
	a+\frac{\sqrt{1-a^2}}{\sqrt{8}}+16\frac{(1-a^2)^2}{(9-a^2)^2}+81\lambda \frac{(1-a^2)^4}{(9-a^2)^4}:=
	\Psi(a).
\end{align*}
Routine calculations show that the final expression for $\Psi(s)$ is monotonically increasing with respect to $s$. Over the interval $[0, 1/3)$, the maximum value is therefore achieved at the boundary point $s = 1/3$. Since the maximum $\Psi(1/3) \leq 0.98$, the inequality (\ref{2.1}) is satisfied for $|a_{0}| \in [0, 1/3)$. This completes the proof that ${\mathcal{A}_f({\bf r})} \leq 1$ holds for $|a_{0}| \in [0, 1/3)$ and $\mathbf{r} \leq 1/3n$.\vspace{1.2mm}

To prove that the constant $\lambda_2$ is sharp, we let $a\in [0, 1)$ and consider the function $f_a$ as given by \eqref{Eq-2.2}. For the point $z=(r,r,\ldots,r)$, we find that
\begin{align*}
	f_a(z)=a-(1-a^2)\sum_{k=1}^{\infty} a^{k-1}(n{\bf r})^k
\end{align*}
and hence, we have the estimates
	\begin{align*}
		\begin{cases}
			\displaystyle\sum_{|\alpha|=0}|a_{\alpha}||z|^{\alpha}=a+(1-a^2)\displaystyle\sum_{k=1}^{\infty} a^{k-1}(n{\bf r})^k=a+(1-a^2)\dfrac{n{\bf r}}{1-na{\bf r}},\vspace{2mm}\\
			\displaystyle\sum_{k=1}^{\infty} k\sum_{|\alpha|=k}|a_{\alpha}|^2\;r^{2|\alpha|}=\displaystyle\frac{(1-a^2)}{a^2}(an{\bf r})^2\sum_{k=0}^{\infty}\binom{^{k+1}}{_1} (a^2{\bf r}^2)^{k}=(n{\bf r})^2\dfrac{(1-a^2)^2}{(1-a^2n^2{\bf r}^2)^2}.
		\end{cases}
\end{align*}
Therefore, in view of the above estimates, we see that
\begin{align*}
{\mathcal{A}_f(r)}&=\sum_{|\alpha|=0}^{\infty}|a_{\alpha}| |z|^{\alpha}+\frac{16}{9}\sum_{k=1}^{\infty}k\sum_{|\alpha|=k}|a_{\alpha}|^2\;r^{2\alpha}+\lambda \left(\sum_{k=1}^{\infty}k\sum_{|\alpha|=k}|a_{\alpha}|^2\;r^{2\alpha}\right)^2\\&=
	a+(1-a^2)\frac{n{\bf r}}{1-na{\bf r}}+\frac{16}{9}\left((n{\bf r})^2\frac{(1-a^2)^2}{(1-a^2n^2{\bf r}^2)^2}\right)+\lambda \left((n{\bf r})^2\frac{(1-a^2)^2}{(1-a^2n^2{\bf r}^2)^2}\right)^2.
\end{align*}
Consequently, we have
\[{\mathcal{A}_f\left(\frac{1}{3n}\right)}=a + \frac{1 - a^{2}}{3 - a}+ \frac{16(1 - a^{2})^{2}}{(9 - a^{2})^{2}}
+ \frac{81\lambda (1 - a^{2})^{4}}{(9 - a^{2})^{4}}.\]
Choosing $a$ as the positive root $s_0$ of the equation $\psi(s)=0$, we consequently see that $\mathcal{A}_f\left(\frac{1}{3n}\right)=1$.
Moreover,
\begin{align*}
{\mathcal{A}_f\left(\frac{1}{3n}\right)}&=a + \frac{1 - a^{2}}{3 - a}
	+ \frac{16(1 - a^{2})^{2}}{(9 - a^{2})^{2}}
	+ \frac{81\lambda_1 (1 - a^{2})^{4}}{(9 - a^{2})^{4}}\\&=
	a 
	+ \frac{1 - a^{2}}{3 - a}
	+ \frac{16(1 - a^{2})^{2}}{(9 - a^{2})^{2}}
	+ \frac{81\lambda (1 - a^{2})^{4}}{(9 - a^{2})^{4}}
	+ \frac{81(\lambda_{3} - \lambda)(1 - a^{2})^{4}}{(9 - a^{2})^{4}}\\&:={\mathcal{A}_{f,1}\left(\frac{1}{3n}\right)}
	.
\end{align*}
 Thus, we see that the expression
\begin{align*}
\mathcal{A}_{f,1}\left(\frac{1}{3n}\right)=1+ (\lambda_{3} - \lambda)\frac{81(1 - a^{2})^{4}}{(9 - a^{2})^{4}},
\end{align*}
is bigger than $1$ in the case when $\lambda_3>\lambda$ which shows that the constant $\lambda$ is best possible. This completes the proof.
\end{proof}

\begin{proof}[\bf Proof of Theorem \ref{Th-2.2}] 
By the given condition $f(z)=a_0+\sum_{|\alpha|=1}^{\infty} a_{\alpha} z^{\alpha}$
is holomorphic in the polydisk $\mathbb{P}\Delta(0;1_n)$ such that $|f(z)|\leq 1$ in $\mathbb{P}\Delta(0;1_n)$. Let us take $z=(z_1,\ldots,z_n)\in \mathbb{P}\Delta(0;1_n)$ such that ${\bf r}=||z||_{\infty}$. By Lemma \ref{Lem4}, we have
\begin{align}
	\label{MA1} |f(z)| \leq \frac{||z||_{\infty}+|a_0|}{1+|a_0|||z||_{\infty}}=\frac{{\bf r}+|a_0|}{1+|a_0|{\bf r}}\le \frac{n{\bf r}+|a_0|}{1+|a_0|n{\bf r}}.
\end{align}
Using (\ref{AM2}) and (\ref{MA1}), we see that
\begin{align}
	\label{MA2} &{\mathcal{B}_f(z,r)}\nonumber\\&=|f(z)|^2+\sum_{|\alpha|=1}^{\infty} |a_{\alpha}| |z|^{\alpha}+\frac{16}{9}\sum_{k=1}^{\infty}k\sum_{|\alpha|=k}|a_{\alpha}|^2\;r^{2\alpha}+\lambda \left(\sum_{k=1}^{\infty}k\sum_{|\alpha|=k}|a_{\alpha}|^2\;r^{2\alpha}\right)^2\nonumber\\&\leq
	\left(\frac{n{\bf r}+|a_0|}{1+|a_0|n{\bf r}}\right)^2+\sum_{k=1}^{\infty}\sum_{|\alpha|=k} |a_{\alpha}|\;{\bf r}^{|\alpha|}+\frac{16}{9}\sum_{k=1}^{\infty}k\sum_{|\alpha|=k}|a_{\alpha}|^2\;{\bf r}^{2|\alpha|}\nonumber\\&\quad+\lambda\left(\sum_{k=1}^{\infty}k\sum_{|\alpha|=k}|a_{\alpha}|^2\;{\bf r}^{2|\alpha|}\right)^2={\mathcal{B}_f(z,{\bf r})}.
\end{align}
Since $\mathcal{B}_f(z,\mathbf{r})$ increases with $\mathbf{r}$, the inequality $\mathcal{B}_f(z,\mathbf{r})\leq \mathcal{B}_f(z, 1/3n)$ holds for $0\leq \mathbf{r}\leq 1/3n$, meaning we only need to consider $\mathbf{r}=1/3n$ to establish inequality (\ref{2.2}).\vspace{1.2mm}

We consider following two cases:

\medskip
{\bf Case 1.} Let $a=|a_0|\geq 1/3n$.  Using (\ref{AM5}) and (\ref{AM7}) (with ${\bf r}=1/3n$) to (\ref{MA2}), we have
\begin{align*}&{\mathcal{B}_f(z,{\bf r})}\\&\leq \left(\frac{n{\bf r}+|a_0|}{1+|a_0|n{\bf r}}\right)^2+A({\bf r})+\frac{16}{9}\left({\bf r}^2\frac{(1-|a_0|^2)^2}{(1-|a_0|^2\;{\bf r}^2)^2}\right)+\lambda\left({\bf r}^2\frac{(1-|a_0|^2)^2}{(1-|a_0|^2\;{\bf r}^2)^2}\right)^2\nonumber\\&\leq
	\left(\frac{n{\bf r}+|a_0|}{1+|a_0|n{\bf r}}\right)^2+A({\bf r})+\frac{16}{9}\left({n^2\bf r}^2\frac{(1-a^2)^2}{(1-a^2\;{n^2\bf r}^2)^2}\right)+\lambda\left({n^2\bf r}^2\frac{(1-a^2)^2}{(1-a^2\;{n^2\bf r}^2)^2}\right)^2\nonumber\\&\leq
	\left(\frac{1+3a}{3+a}\right)^2+\frac{1-a^2}{3-a}+16\frac{(1-a^2)^2}{(9-a^2)^2}+81\lambda \frac{(1-a^2)^4}{(9-a^2)^4}\nonumber\\&=
	1 
	-(1 - a)^{3}(1 +a)\bigg(\frac{a+29}{(9-a^2)^2}
	-81\lambda\frac{(1 - a^{2})(1 + a)^{2}}{(9 - a^{2})^{4}}\bigg)\nonumber\\&=
	1 - \frac{(1 - a)^{3}(1 + a)}{(9 - a^{2})^{4}}
	\, \Phi(a),\nonumber
\end{align*}
where
\begin{align}\label{MA4}\Phi(t)&= 2349 + 81t - 522t^{2} - 18t^{3} + 29t^{4} + t^{5}\\&+ \lambda_2(-81 - 162t + 162t^{3} + 81t^{4}).\nonumber
\end{align}
Let us choose $s\in [0,1]$ such that $\Phi'(s)=0$. Then, from $(\ref{MA4})$, we get the value of $\lambda_2$. Substituting this value of $\lambda_2$ back into $(\ref{MA4})$ and simplifying leads to
\begin{align}
	\label{MA6} \Phi(s) =\frac{9 - s^{2}}{2(2s - 1)}\,(-513 + 910s + 80s^{2} + 2s^{3} + s^{4}).
\end{align}
Let $\psi_2(s) = -513 + 910s + 80s^{2} + 2s^{3} + s^{4}$. One can verify that $\psi(s)$ possesses exactly one positive root in the interval $[1/3, 1]$, approximately $s_0 \approx 0.537869$.
Furthermore, from (\ref{MA6}), we observe that the boundary values $\Phi(1/3)>0$ and $\Phi(1)>0$. This implies $\Phi(s)\geq 0$ throughout the entire interval $[1/3, 1]$. Consequently, we establish the bound $\mathcal{B}_f(z,\mathbf{r})\leq 1$ for $|a_0|\in [1/3, 1]$ and $\mathbf{r}=1/3n$.\vspace{1.2mm}

\noindent{\bf Case 2.} Let $a=|a_0|<1/3n$. Using (\ref{MA2}), (\ref{AM5}) and (\ref{AM12}) (setting $\mathbf{r}=1/3n$), we derive that
\begin{align*} &{\mathcal{B}_f(z,{\bf r})}\\&\leq \left(\frac{n{\bf r}+|a_0|}{1+|a_0|n{\bf r}}\right)^2+B\left(\frac{1}{3n}\right)+\frac{16}{9}\left({\bf r}^2\frac{(1-|a_0|^2)^2}{(1-|a_0|^2\;{\bf r}^2)^2}\right)+\lambda\left({\bf r}^2\frac{(1-|a_0|^2)^2}{(1-|a_0|^2\;{\bf r}^2)^2}\right)^2\nonumber\\&\leq
	\left(\frac{n{\bf r}+|a_0|}{1+|a_0|n{\bf r}}\right)^2+B\left(\frac{1}{3n}\right)+\frac{16}{9}\left({n^2\bf r}^2\frac{(1-a^2)^2}{(1-a^2\;{n^2\bf r}^2)^2}\right)+\lambda\left({n^2\bf r}^2\frac{(1-a^2)^2}{(1-a^2\;{n^2\bf r}^2)^2}\right)^2\nonumber\\&\leq
	\left(\frac{1+3a}{3+a}\right)^2+\frac{\sqrt{1-a^2}}{\sqrt{8}}+16\frac{(1-a^2)^2}{(9-a^2)^2}+81\lambda \frac{(1-a^2)^4}{(9-a^2)^4}\nonumber\\&= \Psi(a).\nonumber
\end{align*}
Routine and straightforward calculations show that the last expression for $\Psi(s)$ is an increasing function of $s$. Hence, its maximum is attained at the point $s_{0} = |a_{0}| = \frac{1}{3}$, and the maximum value $\Psi(s_{0})$ is found to be less than or equal to  $0.987\;(\leq 1)$. Therefore, the desired inequality (\ref{2.2}) follows for $|a_{0}| \in [0, 1/3)$. This proves that $\mathcal{B}_f(z, {\bf r}) \leq 1$ for $|a_{0}| \in [0, 1/3)$ and ${\bf r} \le 1/3n$.\vspace{1.2mm}

To prove that the constant $\lambda$ is sharp, we consider the holomorphic function $f_a$ in the polydisk $\mathbb{P}\Delta(0;1/n)$
given by (\ref{Eq-2.2}). Obviously
\begin{align*}
	f_a(z)=a-(1-a^2)\sum_{k=1}^{\infty} a^{k-1}(z_1+z_2+\cdots+z_n)^k
\end{align*}
for all $z\in\mathbb{P}\Delta(0;1/n)$. For the point $z=(r,r,\ldots,r)$,
we find that
\begin{align}\label{Eq-4.17}
|f_a(z)|=\dfrac{a+n{\bf r}}{1+na{\bf r}}\; \mbox{and}\;
f_a(z)=a-(1-a^2)\sum_{k=1}^{\infty} a^{k-1}(n{\bf r})^k.
\end{align}
Consequently, we see that
\begin{align*}
	{\mathcal{B}_{f_a}(z,r)}&=|f_a(z)|^2+\sum_{|\alpha|=1}^{\infty} |a_{\alpha}| |z|^{\alpha}+\frac{16}{9}\sum_{k=1}^{\infty}k\sum_{|\alpha|=k}|a_{\alpha}|^2\;r^{2\alpha}+\lambda \left(\sum_{k=1}^{\infty}k\sum_{|\alpha|=k}|a_{\alpha}|^2\;r^{2\alpha}\right)^2\nonumber\\&=
	\left(\frac{n{\bf r}+a}{1+an{\bf r}}\right)^2+\frac{(1-a^2)n{\bf r}}{1-na{\bf r}}+\frac{16}{9}\frac{(1-a^2)^2(n{\bf r})^2}{(1-a^2n^2{\bf r}^2)^2}+\lambda \left(\frac{(1-a^2)^2(n{\bf r})^2}{(1-a^2n^2{\bf r}^2)^2}\right)^2.
\end{align*}
Thus, it is clear that
\begin{align*}
	{\mathcal{B}_{f_a}\left(z,\frac{1}{3n}\right)}=\left(\frac{1+3a}{3+a}\right)^2 + \frac{1 - a^{2}}{3 - a}
	+16\frac{(1 - a^{2})^{2}}{(9 - a^{2})^{2}}
	+ 81\lambda\frac{ (1 - a^{2})^{4}}{(9 - a^{2})^{4}}.
\end{align*}
Choosing $a = s_0$, where $s_0$ is the positive root of $-513 + 910s + 80s^{2} + 2s^{3} + s^{4}=0$, results in $\mathcal{B}_{f_a}(z,1/3n)=1$. Additionally, we see that
\begin{align*}
	&{\mathcal{B}_{f_a}\left(z,\frac{1}{3n}\right)}\\&=\left(\frac{1+3a}{3+a}\right)^2 + \frac{1 - a^{2}}{3 - a}
	+16\frac{(1 - a^{2})^{2}}{(9 - a^{2})^{2}}
	+ 81\lambda_1\frac{ (1 - a^{2})^{4}}{(9 - a^{2})^{4}}\\&=
	\left(\frac{1+3a}{3+a}\right)^2 + \frac{1 - a^{2}}{3 - a}
	+16\frac{(1 - a^{2})^{2}}{(9 - a^{2})^{2}}
	+ 81\lambda\frac{ (1 - a^{2})^{4}}{(9 - a^{2})^{4}}
	+ 81(\lambda_{3} - \lambda)\frac{(1 - a^{2})^{4}}{(9 - a^{2})^{4}}.
\end{align*}
 Choose $a$ as the positive root $s_0$ of the equation $-513 + 910s + 80s^{2} + 2s^{3} + s^{4}=0$. Then, a simple computation shows that
\begin{align*}
{\mathcal{B}_{f_a}\left(z,\frac{1}{3n}\right)}=1+ (\lambda_{3} - \lambda)\frac{81(1 - a^{2})^{4}}{(9 - a^{2})^{4}},
\end{align*}
which is bigger than $1$ in the case when $\lambda_3>\lambda$. This verifies the sharpness of $\lambda$, and the proof is completed.
\end{proof}

\begin{proof}[\bf Proof of Theorem \ref{Th-2.3}] 
By the given condition $f(z)=a_0+\sum_{|\alpha|=1}^{\infty} a_{\alpha} z^{\alpha}$
is holomorphic in the polydisk $\mathbb{P}\Delta(0;1_n)$ such that $|f(z)|\leq 1$ in $\mathbb{P}\Delta(0;1_n)$. Let us take $z=(z_1,\ldots,z_n)\in \mathbb{P}\Delta(0;1_n)$ such that ${\bf r}=||z||_{\infty}$.\vspace{1.2mm}

To complete the proof, we consider following two cases:\vspace{1.2mm}

\noindent{\bf Case 1.} Let $a=|a_0|\geq {\bf r}$. From (\ref{AM5}), (\ref{AM7}) and (\ref{MA1}), it follows that
\begin{align*}
{\mathcal{C}_f(r)}&\leq \frac{n{\bf r}+|a_0|}{1+|a_0|n{\bf r}}+A({\bf r})+p\left({\bf r}^2\frac{(1-|a_0|^2)^2}{(1-|a_0|^2\;{\bf r}^2)^2}\right)\\&\leq
	\frac{n{\bf r}+|a_0|}{1+|a_0|n{\bf r}}+n{\bf r}\frac{1-|a_0|^2}{1-n|a_0|{\bf r}}+p\left({n^2\bf r}^2\frac{(1-a^2)^2}{(1-a^2\;{n^2\bf r}^2)^2}\right)=
	{\mathcal{C}_{f, 1}({\bf r})}.
\end{align*}
A routine computation shows that
\begin{align}\label{AAM1}{\mathcal{C}_{f, 1}\left(\frac{\sqrt{5}-2}{n}\right)}&=\frac{\sqrt{5}-2+a}{1+(\sqrt{5}-2)a}+(\sqrt{5}-2)\frac{1-a^2}{1-(\sqrt{5}-2)a}\\&\quad+p(9-4\sqrt{5})\frac{(1-a^2)^2}{(1-a^2(9-4\sqrt{5}))^2}\nonumber\\&=1+\frac{(1 - a)^{3}\!\left(7(-9 + 4\sqrt{5}) + 4(-47 + 21\sqrt{5})a + (-161 + 72\sqrt{5})a^{2}\right)}{
((4\sqrt{5} - 9)a^{2} + 1)^{2}}\nonumber\\&:=1+F(a).\nonumber
\end{align}
Since $-9 + 4\sqrt{5}<0$, $-47 + 21\sqrt{5}<0$ and $-161 + 72\sqrt{5}<0$, it follows that $F(a)\leq 0$. Hence, from (\ref{AAM1}), we see that 
\begin{align*}
	\mathcal{C}_{f, 1}\left(\frac{\sqrt{5}-2}{n}\right)\leq 1.
\end{align*} 
Thus, the desired inequality (\ref{2.3}) is valid for the case where $|a_0|\geq \mathbf{r}$ and $\mathbf{r}\leq {(\sqrt{5}-2)}/{n}$.\vspace{1.2mm}

\noindent{\bf Case 2.} Let $a=|a_0|<{\bf r}$. using (\ref{AM5}), (\ref{AM12}) and (\ref{MA1}), it is easy to see that
\begin{align*}
	{\mathcal{C}_f(r)}&\leq \frac{n{\bf r}+|a_0|}{1+|a_0|n{\bf r}}+B({\bf r})+p\left({\bf r}^2\frac{(1-|a_0|^2)^2}{(1-|a_0|^2\;{\bf r}^2)^2}\right)\nonumber\\&\leq
	\frac{n{\bf r}+|a_0|}{1+|a_0|n{\bf r}}+n{\bf r}\frac{\sqrt{1-|a_0|^2}}{\sqrt{1-n^2{\bf r}^2}}+p\left({n^2\bf r}^2\frac{(1-a^2)^2}{(1-a^2\;{n^2\bf r}^2)^2}\right)=
	{\mathcal{C}_{f, 2}({\bf r})}.\nonumber
\end{align*}
A routine calculation shows that ${\mathcal{C}_{f, 2}(\mathbf{r})}$ is increasing with respect to $\mathbf{r}$ for all $a\in[0,\mathbf{r}]$. Consequently, its maximum value is attained when $a=\mathbf{r}$. This specific case was already settled in the preceding analysis. \vspace{1.2mm}

To verify that the estimate for $p$ is sharp, let $a\in [0, 1)$ and consider the function $f_a$ as defined by \eqref{Eq-2.2}. Thus, in view of \eqref{Eq-4.17}, we obtain
\begin{align*}
{\mathcal{C}_{f_a}(r)}&=|f_a(z)|+\sum_{|\alpha|=1}^{\infty} |a_{\alpha}| |z|^{\alpha}+\left(2(\sqrt{5}-1)+\varepsilon\right)\left(\sum_{k=1}^{\infty}k\sum_{|\alpha|=k}|a_{\alpha}|^2\;r^{2\alpha}\right)\nonumber\\&=
\frac{n{\bf r}+a}{1+an{\bf r}}+\frac{(1-a^2)n{\bf r}}{1-na{\bf r}}+\left(2(\sqrt{5}-1)+\varepsilon\right)\left(\frac{(1-a^2)^2(n{\bf r})^2}{(1-a^2n^2{\bf r}^2)^2}\right).
\end{align*}
For ${\bf r}={(\sqrt{5}-2)}/{n}$, we have
\begin{align*}
&\mathcal{C}_{f_a}\left(\frac{\sqrt{5}-2}{n}\right)\\&=|f(z)|+\sum_{|\alpha|=1}^{\infty} |a_{\alpha}| |z|^{\alpha}+\left(2(\sqrt{5}-1)+\varepsilon\right)\left(\sum_{k=1}^{\infty}k\sum_{|\alpha|=k}|a_{\alpha}|^2\;r^{2\alpha}\right)\nonumber\\&=
\frac{a+\sqrt{5}-2}{1+(\sqrt{5}-2)a}+\frac{(\sqrt{5}-2)(1-a^2)}{1-(\sqrt{5}-2)a}+\left(2(\sqrt{5}-1)+\varepsilon\right)\left(\frac{(9-4\sqrt{5})(1-a^2)^2}{(1-(9-4\sqrt{5})a^2)^2}\right)\\&=
1+\frac{(1-a)^3 \left(a\left(\left(72\sqrt{5} - 161\right)+ \left(84\sqrt{5} - 188\right)\right)
+ 7\left(4\sqrt{5} - 9\right)\right)}{(1-(9-4\sqrt{5})a^2)^2}\\&\quad
+\varepsilon\frac{(9-4\sqrt{5})(1-a^2)^2}{(1-(9-4\sqrt{5})a^2)^2},
\end{align*}
which is bigger than $1$ in the case when $a\to 1$. This completes the proof.
\end{proof}






\end{document}